\numberwithin{equation}{section}
\providecommand{\algorithmname}{Algorithm}
\theoremstyle{plain}
\newtheorem{thm}{\protect\theoremname}[section]
  \theoremstyle{definition}
  \newtheorem{defn}[thm]{\protect\definitionname}
  \theoremstyle{remark}
  \newtheorem{rem}[thm]{\protect\remarkname}
  \theoremstyle{plain}
  \newtheorem{prop}[thm]{\protect\propositionname}
  \theoremstyle{plain}
  \newtheorem{lem}[thm]{\protect\lemmaname}
\title{Optimal Strategy in ``Guess Who?'': Beyond Binary Search}
\author{Mihai Nica} % \thanks{Supported by a MacCracken fellowship from New York University and National Science Foundation grant DMS-1209165}}
  \providecommand{\definitionname}{Definition}
  \providecommand{\lemmaname}{Lemma}
  \providecommand{\propositionname}{Proposition}
  \providecommand{\remarkname}{Remark}
\providecommand{\theoremname}{Theorem}
\begin{document}

%\begin{titlepage}
%	\centering
%	{\scshape\Large Optimal Strategy in ``Guess Who?'': Beyond Binary Search \par}
%	\vspace{1cm}
%	{ \inputencoding{latin9}\foreignlanguage{english}{\href{mailto:nica@cims.nyu.edu}{nica@cims.nyu.edu}}\selectlanguage{english}\par}
%	\vspace{1.5cm}
%	{Short Title: Optimal Strategy in ``Guess Who?'' \par}
%	\vspace{5cm}
%	Mihai Nica
%
%    Courant Institute of Mathematical Sciences
%
%    251 Mercer St. NY, NY, USA, 10012
%	\vfill
%
%% Bottom of the page
%	{\large \today\par}
%\end{titlepage}

\global\long\def\p{\mathbf{P}}
\global\long\def\q{\mathbf{Q}}
\global\long\def\cov{\mathbf{Cov}}
\global\long\def\var{\mathbf{Var}}
\global\long\def\corr{\mathbf{Corr}}
\global\long\def\e{\mathbf{E}}
\global\long\def\one{\mathtt{1}}

\global\long\def\pp#1{\mathbf{P}\left(#1\right)}
\global\long\def\ee#1{\mathbf{E}\left[#1\right]}
\global\long\def\norm#1{\left\Vert #1\right\Vert }
\global\long\def\abs#1{\left|#1\right|}
\global\long\def\given#1{\left|#1\right.}
\global\long\def\ceil#1{\left\lceil #1\right\rceil }
\global\long\def\floor#1{\left\lfloor #1\right\rfloor }

\global\long\def\bA{\mathbb{A}}
\global\long\def\bB{\mathbb{B}}
\global\long\def\bC{\mathbb{C}}
\global\long\def\bD{\mathbb{D}}
\global\long\def\bE{\mathbb{E}}
\global\long\def\bF{\mathbb{F}}
\global\long\def\bG{\mathbb{G}}
\global\long\def\bH{\mathbb{H}}
\global\long\def\bI{\mathbb{I}}
\global\long\def\bJ{\mathbb{J}}
\global\long\def\bK{\mathbb{K}}
\global\long\def\bL{\mathbb{L}}
\global\long\def\bM{\mathbb{M}}
\global\long\def\bN{\mathbb{N}}
\global\long\def\bO{\mathbb{O}}
\global\long\def\bP{\mathbb{P}}
\global\long\def\bQ{\mathbb{Q}}
\global\long\def\bR{\mathbb{R}}
\global\long\def\bS{\mathbb{S}}
\global\long\def\bT{\mathbb{T}}
\global\long\def\bU{\mathbb{U}}
\global\long\def\bV{\mathbb{V}}
\global\long\def\bW{\mathbb{W}}
\global\long\def\bX{\mathbb{X}}
\global\long\def\bY{\mathbb{Y}}
\global\long\def\bZ{\mathbb{Z}}

\global\long\def\cA{\mathcal{A}}
\global\long\def\cB{\mathcal{B}}
\global\long\def\cC{\mathcal{C}}
\global\long\def\cD{\mathcal{D}}
\global\long\def\cE{\mathcal{E}}
\global\long\def\cF{\mathcal{F}}
\global\long\def\cG{\mathcal{G}}
\global\long\def\cH{\mathcal{H}}
\global\long\def\cI{\mathcal{I}}
\global\long\def\cJ{\mathcal{J}}
\global\long\def\cK{\mathcal{K}}
\global\long\def\cL{\mathcal{L}}
\global\long\def\cM{\mathcal{M}}
\global\long\def\cN{\mathcal{N}}
\global\long\def\cO{\mathcal{O}}
\global\long\def\cP{\mathcal{P}}
\global\long\def\cQ{\mathcal{Q}}
\global\long\def\cR{\mathcal{R}}
\global\long\def\cS{\mathcal{S}}
\global\long\def\cT{\mathcal{T}}
\global\long\def\cU{\mathcal{U}}
\global\long\def\cV{\mathcal{V}}
\global\long\def\cW{\mathcal{W}}
\global\long\def\cX{\mathcal{X}}
\global\long\def\cY{\mathcal{Y}}
\global\long\def\cZ{\mathcal{Z}}

\global\long\def\sA{\mathscr{A}}
\global\long\def\sB{\mathscr{B}}
\global\long\def\sC{\mathscr{C}}
\global\long\def\sD{\mathscr{D}}
\global\long\def\sE{\mathscr{E}}
\global\long\def\sFA{\mathscr{F}}
\global\long\def\sG{\mathscr{G}}
\global\long\def\sH{\mathscr{H}}
\global\long\def\sI{\mathscr{I}}
\global\long\def\sJ{\mathscr{J}}
\global\long\def\sK{\mathscr{K}}
\global\long\def\sL{\mathscr{L}}
\global\long\def\sM{\mathscr{M}}
\global\long\def\sN{\mathscr{N}}
\global\long\def\sO{\mathscr{O}}
\global\long\def\sP{\mathscr{P}}
\global\long\def\sQ{\mathscr{Q}}
\global\long\def\sR{\mathscr{R}}
\global\long\def\sS{\mathscr{S}}
\global\long\def\sT{\mathscr{T}}
\global\long\def\sU{\mathscr{U}}
\global\long\def\sV{\mathscr{V}}
\global\long\def\sW{\mathscr{W}}
\global\long\def\sX{\mathscr{X}}
\global\long\def\sY{\mathscr{Y}}
\global\long\def\sZ{\mathscr{Z}}

\global\long\def\tr{\text{Tr}}
\global\long\def\re{\text{Re}}
\global\long\def\im{\text{Im}}
\global\long\def\supp{\text{supp}}
\global\long\def\sgn{\text{sgn}}
\global\long\def\d{\text{d}}
\global\long\def\dist{\text{dist}}
\global\long\def\span{\text{span}}
\global\long\def\ran{\text{ran}}
\global\long\def\ball{\text{ball}}
\global\long\def\ai{\text{Ai}}
\global\long\def\occ{\text{Occ}}

\global\long\def\To{\Rightarrow}
\global\long\def\half{\frac{1}{2}}
\global\long\def\oo#1{\frac{1}{#1}}

\global\long\def\al{\alpha}
\global\long\def\be{\beta}
\global\long\def\ga{\gamma}
\global\long\def\Ga{\Gamma}
\global\long\def\de{\delta}
\global\long\def\De{\Delta}
\global\long\def\ep{\epsilon}
\global\long\def\ze{\zeta}
\global\long\def\et{\eta}
\global\long\def\th{\theta}
\global\long\def\Th{\Theta}
\global\long\def\ka{\kappa}
\global\long\def\la{\lambda}
\global\long\def\La{\Lambda}
\global\long\def\rh{\rho}
\global\long\def\si{\sigma}
\global\long\def\ta{\tau}
\global\long\def\ph{\phi}
\global\long\def\Ph{\Phi}
\global\long\def\vp{\varphi}
\global\long\def\ch{\chi}
\global\long\def\ps{\psi}
\global\long\def\Ps{\Psi}
\global\long\def\om{\omega}
\global\long\def\Om{\Omega}
\global\long\def\Si{\Sigma}

\global\long\def\dequal{\stackrel{d}{=}}
\global\long\def\pto{\stackrel{\p}{\to}}
\global\long\def\asto{\stackrel{\text{a.s.}}{\to}}
\global\long\def\dto{\stackrel{\text{d.}}{\to}}
\global\long\def\ld{\ldots}
\global\long\def\di{\partial}

\maketitle
\begin{abstract}
``Guess Who?'' is a popular two player game where players ask ``Yes''/``No'' questions to search for their opponent's secret identity
from a pool of possible candidates. This is modeled as a simple stochastic
game. Using this model, the optimal strategy is explicitly found. Contrary to popular belief, performing a binary search is \emph{not} always optimal. Instead, the optimal strategy for the player who trails is to make certain bold plays in an attempt catch up. This is discovered by first analyzing a continuous version of the game where players play indefinitely and the winner is never decided after finitely many rounds.

\end{abstract}

\section{Introduction}

``Guess Who?'' is a zero-sum two player game where players take
turns asking ``Yes'' or ``No'' questions to find their opponent's
secret identity \cite{wiki_Guess_Who}. Each player keeps track of
a finite pool of possible candidates for their opponent's secret identity.
Players alternate turns asking a ``Yes'' or ``No'' question (that
their opponent must answer truthfully) about their opponent's identity
and reduce their pool of possible candidates. For example, if Player
1 asks: ``Does your character have blue eyes?'' and Player 2 answers
``No'', Player 1 eliminates all candidates in his pool that have
blue eyes. Eventually, only one candidate remains in the pool and
this last character must be their opponent's secret identity! The first
player to narrow their pool down to a single character in this way
wins the game.

To model this game mathematically we will make the following assumptions:
\begin{itemize}
\item The secret identity of the opponent is uniformly distributed amongst
all possible candidates. Because of the eliminating nature of the
``Yes''/``No'' questions, this property persists throughout the
game. With this assumption, only the number of remaining characters
in the pool is relevant to the analysis, not the details of which
characters in particular are remaining.
\item For any candidate pool size $n$ and any $1\leq b\leq n-1$, it is
always possible to construct a question for which exactly $b$ candidates
correspond to the ``Yes'' answer. One way to do this is to sort
the candidates alphabetically and ask ``Does your character\textquoteright{}s
name come alphabetically before $\underline{\ \ \ }$?'' where the
name is chosen to be the name which is $b$-th on the list. We use
the terminology a \textbf{``bid of size $b$''} for such a ``Yes''/``No''
question.
\end{itemize}

With these assumptions, ``Guess Who?'' can be modeled as a so called
\emph{simple stochastic game} as defined by Condon \cite{Condon}. The strategy
of the game is in choosing the bid size. Players can balance risk
vs. reward on each turn by varying their bid size. The bid $b=1$
is a risky play: it gives a small chance to win the game immediately
but is more likely to reduce the candidate pool by only 1. In contrast,
a bid of $b=\floor{\half n}$ is the least risky bid. The main result
of this article is to find the optimal bidding strategy for ``Guess Who''
and prove that it is optimal:
\begin{thm}
\label{thm:Main} (Optimal Strategy and Optimal Probabilities for
``Guess Who?'') When it is Player 1's turn, if Player 1 has $n$ candidates in their pool and Player 2 has $m$ candidates in their pool, then Player 1 has the following optimal strategy: \end{thm}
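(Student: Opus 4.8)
The plan is to collapse the two-player stochastic game into a one-dimensional race controlled by a single conserved quantity, solve the resulting optimization in the continuous regime to locate the candidate optimum, and then certify optimality in the genuine discrete game by a dynamic-programming (verification) argument in the sense of Condon.

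First I would isolate the governing invariant. Write $n_t$ for the mover's pool size after $t$ of their own turns. A bid of size $b$ sends $n_t\mapsto b$ with probability $b/n_t$ and $n_t\mapsto n_t-b$ with probability $(n_t-b)/n_t$, so a one-line computation gives $\ee{1/n_{t+1}\mid n_t}=\frac{b}{n_t}\cdot\frac1b+\frac{n_t-b}{n_t}\cdot\frac{1}{n_t-b}=\frac{2}{n_t}$, \emph{independent of the bid $b$}. Hence $M_t:=2^{-t}/n_t$ is a bounded martingale, and if $\tau$ is the turn on which the mover first reaches a pool of size $1$, optional stopping yields $\ee{2^{-\tau}}=1/n$ for every admissible strategy. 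This is the crux: a player's only strategic freedom is the \emph{shape} of the distribution of their winning time $\tau$, subject to the one moment constraint $\ee{2^{-\tau}}=1/n$, with binary search being the degenerate choice $\tau\equiv\ceil{\log_2 n}$.

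Next I would recast the win condition in this language. Since Player 1 acts on global rounds $1,3,5,\dots$ and Player 2 on rounds $2,4,6,\dots$, Player 1 wins exactly when their finishing round precedes the opponent's, i.e. when $2\tau_1-1<2\tau_2$, which for integers is $\tau_1\le\tau_2$; as the two underlying processes are independent one obtains $\pp{\text{Player 1 wins}}=\pp{\tau_1\le\tau_2}$ with $\ee{2^{-\tau_1}}=1/n$ and $\ee{2^{-\tau_2}}=1/m$. Relaxing $\tau$ to a positive real and dropping the integer alternation is precisely the continuous game of the introduction; there the minimax over distributions with prescribed $\ee{2^{-\tau}}$ is solvable in closed form, and its optimizer exhibits the dichotomy. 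The leading player concentrates mass, playing the balanced bid $b=\floor{n/2}$ (binary search), which maximizes the per-turn drift $\ee{-\Delta\log_2 n_t}=H(b/n)\le 1$; the trailing player instead spreads mass toward small $\tau$ — a bold bid buying a small chance of a large jump, hence a chance to overtake. This continuous solution is what supplies the candidate value $P(n,m)$ and the candidate optimal bid recorded in the statement.

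Finally I would verify optimality in the discrete game. The value of a simple stochastic game solves the Bellman optimality equation
\[
P(n,m)=\max_{1\le b\le n-1}\left[\frac{b}{n}\,V(b;m)+\frac{n-b}{n}\,V(n-b;m)\right],\qquad V(k;m)=\begin{cases}1,& k=1,\\ 1-P(m,k),& k\ge 2,\end{cases}
\]
so it suffices to check that the claimed $P(n,m)$ satisfies this with maximizer the claimed optimal bid. I would argue by induction on $n+m$: substitute the explicit formula for $P(m,\cdot)$ into the bracketed one-step return $\phi(b)$ and prove $\phi(b)\le\phi(b^{\ast})$ with equality at $b^{\ast}$. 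I expect this maximization to be the main obstacle, for it is exactly where ``binary search is not always optimal'' must be demonstrated: $\phi$ need not be unimodal, and whether its maximizer is $\floor{n/2}$ or a bold value is governed by the comparison of $n$ and $m$ through the piecewise formula for $P$; moreover the integer realizability of $\tau$ makes the continuous relaxation loose in small cases (e.g. from $n=3$ one can win in one turn only with probability $1/3$, not the $2/3$ permitted by the moment constraint alone), so the discrete check is genuinely necessary rather than cosmetic. A secondary obstacle is the non-halting phenomenon intrinsic to simple stochastic games: to rule out spurious solutions of the Bellman equation I must show that under the value-attaining strategies the game terminates almost surely — again via $M_t$, whose nonnegative-martingale convergence forces $n_t$ to stabilize at $1$ — so that optional stopping applies and the verified solution is the true game value rather than an artifact of perpetual play.
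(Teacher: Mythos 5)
Your martingale observation is correct and elegant: $\ee{1/n_{t+1}\mid n_t}=2/n_t$ regardless of the bid, hence $\ee{2^{-\tau}}=1/n$ for every strategy, and your endgame --- verify the closed-form candidate against the Bellman equation by induction on $n+m$ --- is exactly how the paper actually proves the theorem (its recursion proposition plus two maximization lemmas). But the middle of your argument has a genuine gap: the reduction of the game to $\pp{\tau_1\le\tau_2}$ over \emph{independent} stopping times, each player constrained only through the achievable law of their own $\tau$, is not a valid reformulation of the game. Players observe each other's pools, and the optimal strategy is genuinely adaptive: the bold bid $2^{\floor{\log_2(m-1)}}$ is a function of the \emph{opponent's current} pool $m$, which evolves randomly during play (from $\left\langle 9,5,P_1\right\rangle$, after a failed bold bid Player 1's next bid is $1$ or $2$ according as Player 2's pool has become $2$ or $3$). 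So under optimal play the pair $(\tau_1,\tau_2)$ need not have a product law, and the value of the static ``oblivious'' game you describe, where each player pre-commits to a distribution of $\tau$, is not obviously (and I do not believe is actually) equal to the game value $p^{\star}(n,m)$. You acknowledge that the constraint $\ee{2^{-\tau}}=1/n$ fails to characterize the achievable laws, but the deeper unaddressed issue is independence versus adaptivity: your step ``as the two underlying processes are independent'' assumes away precisely the interaction that the theorem is about. Note also that your relaxation is not the paper's ``Continuous Guess Who?'', which keeps the turn-by-turn bidding structure with real-valued pools and redefines winning via eventual entrapment in the weeds; in the paper that continuous analysis is purely heuristic and carries no logical weight, whereas in your plan the static minimax is load-bearing (it is supposed to produce the candidate value ``in closed form''), and that computation is asserted, never performed.

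The second gap is that the step you yourself flag as the main obstacle --- substituting the piecewise formula into the one-step return $\phi(b)$ and proving $\phi(b)\le\phi(b^{\ast})$ with equality at the claimed bid --- is the entire mathematical content of the theorem, and you leave it unexecuted. In the paper this occupies two case-analysis lemmas (weeds and upper hand), each requiring monotonicity arguments in $b$ and in the level $\ell=\floor{\log_2(b-1)}$, a convexity/piecewise-linear argument for the halving bid, and the elimination of impossible case combinations; none of it is routine, and it is exactly where ``binary search is not always optimal'' gets proved. Finally, your secondary worry about non-halting play and spurious Bellman solutions is moot for this game: every bid strictly decreases the bidder's pool, so $n+m$ strictly decreases on every turn, the game graph is acyclic, and induction on $n+m$ from the boundary data $p^{\star}(1,m)=1$, $p^{\star}(n,1)=0$ already yields existence and uniqueness of the solution; no optional stopping or martingale convergence argument is needed there.
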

\begin{itemize}
\item \emph{If $n\geq2^{k+1}+1$ while $2^{k}+1\leq m\leq2^{k+1}$ for some
$k\in\bN\cup\{0\}$, then Player 1 is in the weeds and must make a
bold move to catch up! Their optimal play is a bid of $b^{\ast}(n,m)=2^{k}=2^{\floor{\log_{2}(m-1)}}$
and the probability Player 1 wins if both players play optimally is:
\begin{equation}
p^{\star}(n,m)=\frac{2^{k+1}}{n}-\frac{2}{3}\cdot\frac{2{}^{2k+1}+1}{nm}.
\end{equation}
}
\item \emph{If $2^{k}+1\leq n\leq2^{k+1}$ while $m\geq2^{k}+1$ for some
$k\in\bN\cup\{0\}$, then Player 1 has the upper hand and can stay
ahead by making a low risk play! Their optimal play is a bid of $b^{\ast}(n,m)=\floor{\half n}$
and the probability Player 1 wins if both players play optimally is:}
\begin{equation}
p^{\star}(n,m)=1-\frac{2^{k}}{m}+\frac{2}{3}\cdot\frac{2^{2k}+2}{nm}.
\end{equation}

\end{itemize}

The proof of Theorem \ref{thm:Main} is provided in Section \ref{sec:Proof},
and goes by solving a recurrence relation that $p^{\star}(n,m)$ satisfies.
Figure \ref{fig:Optimal} shows a plot of Player 1's probability of
winning as a function of the pool sizes $(n,m)$.

%%%%%%%%%%%%%%%%%%%%%%%%%%%%%%
%FIGURE 1 FINAL PLACEMENT HERE
%%%%%%%%%%%%%%%%%%%%%%%%%%%%%%

\begin{figure}[htbp]
\begin{center}
\includegraphics[width=0.49\linewidth]{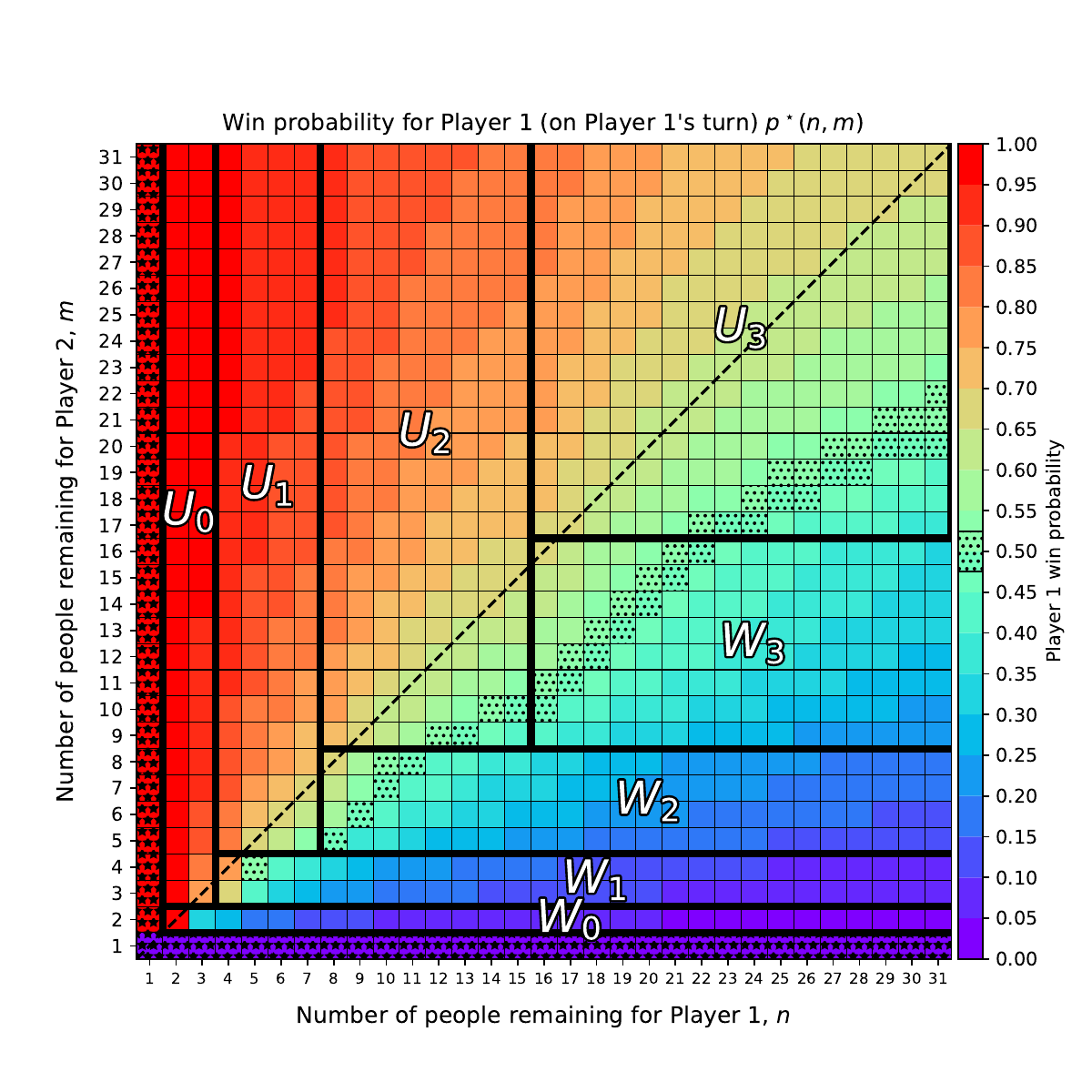}
\includegraphics[width=0.49\linewidth]{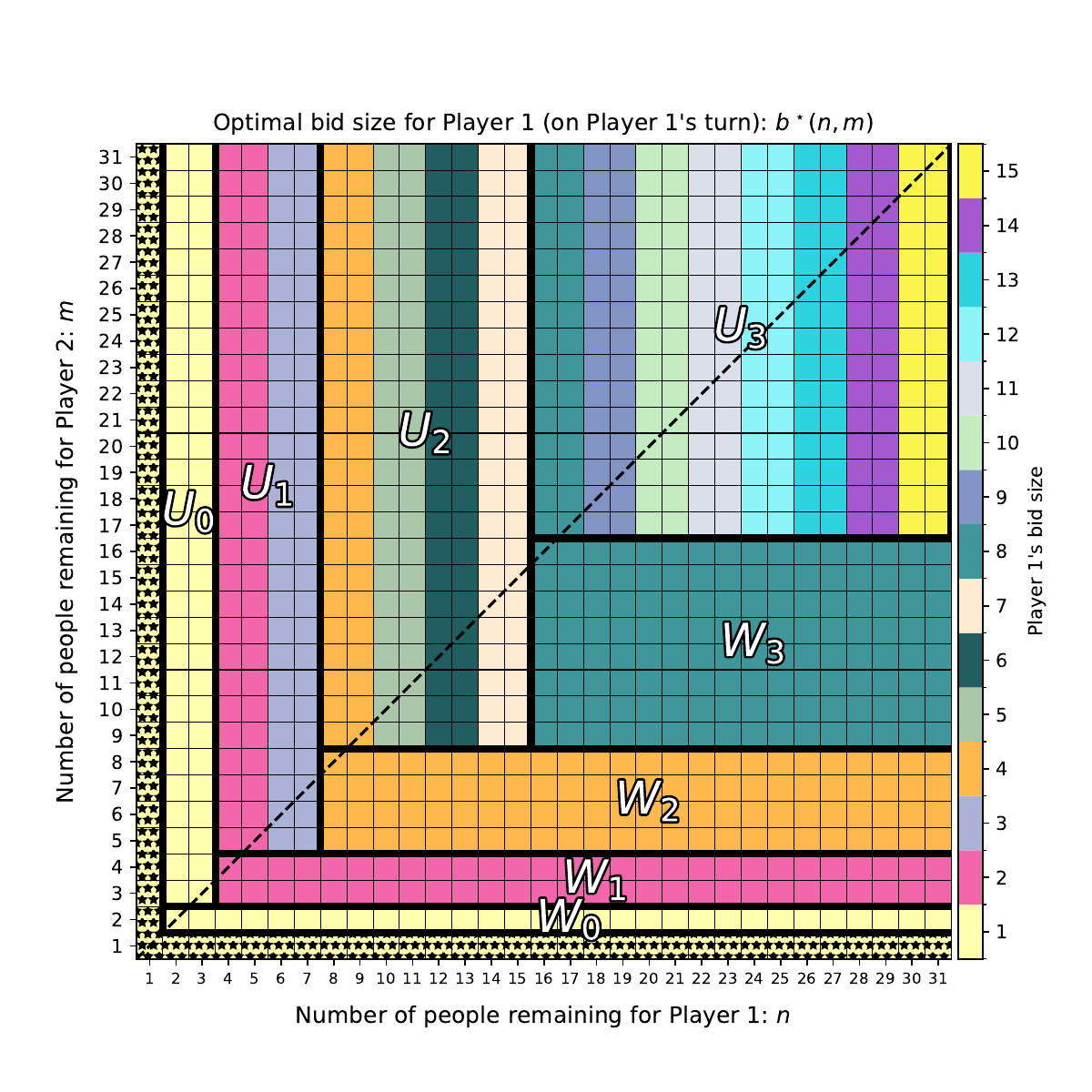}
\end{center}
\caption{\label{fig:Optimal}The winning probability $p^\star(n,m)$ (left) and the optimal bid $b^\star(n,m)$ (right) of Player 1 ``Guess Who?'' on Player 1's turn when Player 1 has $n$ candidates in their pool and Player 2 has $m$ candidates in their pool. The region where $n=1$ or $m=1$ and a player immediatly wins is highlited with stars. Values where $p^\star(n,m)$ is close to 0.5 are also shaded. The ``upper hand'' regions $U_k$ and ``in the weeds'' regions $W_k$ are also labelled.}
\end{figure}

\subsection{Bold Plays}

It is an elementary entropy calculation that the safe bid $b=\floor{\half n}$
will minimize the expected number of ``Yes''/''No'' questions
for Player 1. This is \emph{not }the optimal strategy in ``Guess
Who?'' because Player 1 does not want to minimize this expected value:
instead he wants to maximize the probability of getting there before
Player 2 does. This race against the opponent is what drives the optimal
bidding behavior when Player 1 is significantly behind his opponent.
When this happens, Player 1's optimal strategy is a bold bid, $b=2^{\floor{\log_{2}(m-1)}}$
which depends only on Player 2's (!) remaining pool, and is always
strictly $<\floor{\half n}$. This has a low probability of success
(always strictly$<\half$) but would put Player 1 back in the running
if he were lucky. 

The concept of risky plays with big payoffs in stochastic games has
a rich history. A classic example is the single player casino game
red-and-black when playing against a subfair casino studied by Dubins and Savage \cite{HowToGambleIfYouMust}.
There is also a two player version of red-and-black, where players
try to bankrupt each other, which has also been studied by Secchi \cite{secchi1997}, and adapted by Pontiggia \cite{pontiggia2005}. Still more variations are studied more recently by Chen and Hsiau \cite{chen2010}. In all these examples, the authors find when bold plays are optimal.

Another example, more similar to ``Guess Who?'', is the two player
dice game ``Pig'' where players race to 100 points while managing
risk vs. reward on each of their turns. Without the complication of
racing against the opponent, Haigh and Roters \cite{Haigh_Roters_2000_pig} are able to conduct exact analysis. A game-show version where the players play simultaneously has also been studied by \cite{henk2006_pig}. However,
the interplay between two racing players complicates things. Neller and Presser \cite{PigPractical} have numerically calculated the optimal strategy in situations where both players have $<100$ points using dynamical programming techniques,
but there is no known simple description of the optimal strategy like
we have for ``Guess Who?''. The optimal
strategy to ``Guess Who?'' in Theorem \ref{thm:Main} is satisfying
because we can find a simple exact solution on how and when to play
boldly.

\subsection{Log Periodicity and ``Continuous Guess Who?''}

The landscape of the game exhibits a log periodic behavior. In particular
for fixed $c>0$ , $p^{\star}(n,cn)$ does \emph{not} converge as
$n\to\infty$. Instead it approaches a function which is periodic
in $\log_{2}(n)$. This kind of behavior is not uncommon in this kind
of stochastic system, for example see group Russian roulette studied by van de Brug et al. \cite{GroupRussianRoulette}
or ties amongst i.i.d. random variables studied by Eisenberg et al. \cite{eisenberg1993}. In
``Guess Who?'' there is an asymptotic function $p_{\infty}^{\star}:\bR^{>0}\times\bR^{>0}\to[0,1]$
which is exactly $\log_2$-periodic in the sense that $p_{\infty}^{\ast}(2x,2y)=p_{\infty}^{\star}(x,y)$
and for which $p^{\star}(n,m)=p_{\infty}^{\ast}(n,m)+O\left(\frac{1}{nm}\right)$.
 
This function $p^{\star}_{\infty}$ was first discovered as the probability that Player 1 wins a generalized game called ``Continuous Guess Who?'' which is introduced and studied in Section \ref{sec:Asymptotics}. This game is more straightforward to analyze than ``Guess Who?'' because there are no small number effects. Figure \ref{fig:AsymptoticProb} shows a plot of $p_{\infty}^{\star}(2^{k}\al,2^{k}\be)$
when $(\al,\be)$ is in the L shaped region $(2,\infty)\times(1,2]\bigcup(1,2]\times(1,\infty)$. The landscape of $p_{\infty}^{\star}$ on all of $\bR^{>0}\times\bR^{>0}$ can be understood by tiling scaled copies of this L shaped region onto the entire quadrant.

%%%%%%%%%%%%%%%%%%%%%%%%%%%%%%
%FIGURE 2 FINAL PLACEMENT HERE
%%%%%%%%%%%%%%%%%%%%%%%%%%%%%%  

\begin{figure}[htbp]
\begin{center}
\includegraphics[width=0.95\linewidth]{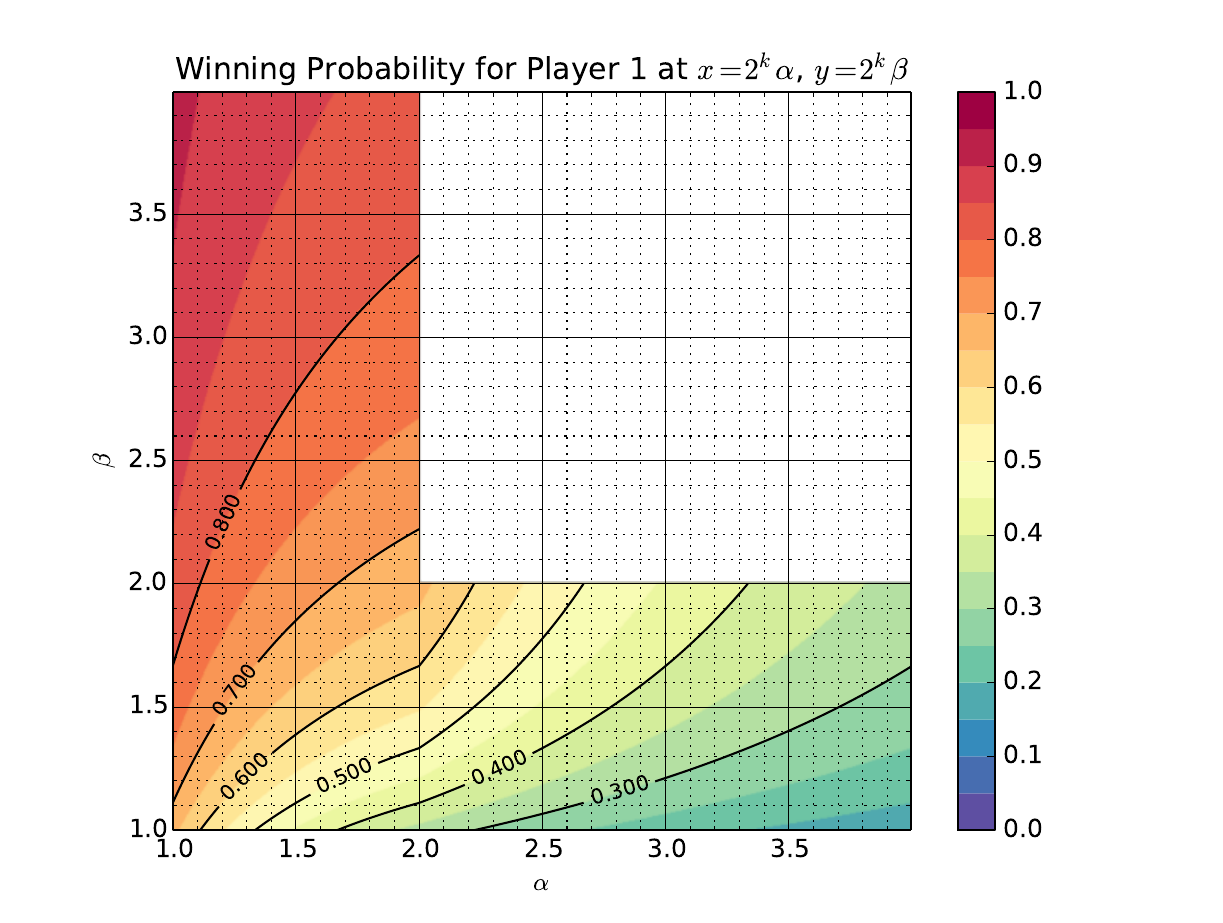}
\end{center}
\caption{\label{fig:AsymptoticProb}The probability $p_{\infty}^{\star}(2^{k}\al,2^{k}\be)$
of Player 1 winning ``Continuous Guess Who?'' from the position $x=2^k\al$, $y=2^k\be$ in the region $(\al,\be)\in(2,\infty)\times(1,2]$ and $(\al,\be)\in(1,2]\times(1,\infty)$. This closely approximates ``Guess Who?'' when $n,m$ are large: $p^{\star}(n,m)=p_{\infty}^{\star}(n,m)+O(\frac{1}{nm})$.}
\end{figure}

This log-periodicity means that if both players start with the same pool size, no matter how large, Player 1 will always have a significant first turn advantage. If he
plays optimally he will win with a probability between $\frac{5}{8}$
and $\frac{2}{3}$, depending on the particular starting size. It
would be more fair to start Player 1 with a larger pool of candidates
to offset his advantage from playing first. The fair way to do this is enlarge his starting pool by a factor
between $\frac{4}{3}$ and $\frac{3}{2}$.

\subsection{Real World Applications}
Even though ``Guess Who?'' is an abstract game, the key features of the game are more broadly applicable:
\begin{itemize}
\item Players are racing to reach a specific goal and get nothing for a second place finish.
\item The progress of both players toward the goal is public knowledge.
\item Players have the ability to balance risk vs reward on the speed of progress toward the goal.
\end{itemize}
In terms of these features, ``Guess Who?'' can be thought of as a toy model of some situations where two firms are competing to be the first to bring a product to market and have the ability to manage risk vs reward on the time it takes for product development. Our analysis suggests that the leading firm should take no risks (i.e. try only to minimize expected development time) and the trailing firm should take the smallest risk possible catch up to the leading firm.

An example from history is the space race between the US and Soviet Union. After Apollo 8 became the first manned mission to orbit the Moon in December 1968, the Soviet Union went forward with a launch of its N1 rocket to attempt lunar orbit in February 1969 despite warnings from engineers on the low probability of mission success. The rocket catastrophically failed, but nevertheless the Soviet Union quickly scrambled for another attempt in July 1969, just 13 days before the launch of Apollo 11. The rocket exploded only 23 seconds after launch and resulted in the largest non-nuclear man-made explosion of all time. Clearly, the Soviet Union was trying to increase the speed of development at the price of increased risk. They knew that taking risks was the best way to maximize the chance of catching up.

\section{The Model}
\begin{defn}
\label{def:MathGuessWho} Mathematician's ``Guess Who'' is a game
between two players played on the statespace: 
\begin{equation}
\bS=\left\{ \left\langle n,m,P_{i}\right\rangle \ :\ n\in\bN,m\in\bN,i\in\{1,2\}\right\} .
\end{equation}
The first entry indicates the number of people remaining in Player
1's pool of possible candidates for Player 2's mystery character.
(This is the number of characters who have not yet been removed from
consideration through previous questions.) The second entry indicates
the same thing for Player 2. The last token, either $P_{1}$ or $P_{2}$
indicates whose turn it is to play. Players alternate turns. 

If the last token is $P_{1}$, it is Player 1's turn. On his turn,
from the state $\left\langle n,m,P_{1}\right\rangle $, Player 1 makes
a \textbf{bid} $b_{1}\in[1,n-1]\cap\bN$. This represents a ``Yes-No''
question that Player 1 may ask Player 2 about his hidden character.
$b_{1}$ represents the size of the set of ``Yes'' answers. Once
Player 1 has selected his bid, the game state evolves stochastically
according to the following rules:
\begin{equation*}
\text{Starting at }\left\langle n,m,P_{1}\right\rangle \text{ with Player 1 bidding }b_{1}\to\begin{cases}
\left\langle b_{1},m,P_{2}\right\rangle  & \text{with probability }\frac{b_{1}}{n}\\
\left\langle n-b_{1},m,P_{2}\right\rangle  & \text{with probability }\frac{n-b_{1}}{n}
\end{cases}.
\end{equation*}
This reflects the fact that Player 2's mystery character is uniformly
distributed in the pool of characters. From this state, it is Player
2's turn. Player 2 makes a bid $b_{2}\in[1,m-1]\cap\bN$, and the
state space evolves in the same way:
\begin{equation*}
\text{From }\left\langle n^{\prime},m,P_{2}\right\rangle \text{ with Player 2 bidding }b_{2}\to\begin{cases}
\left\langle n^{\prime},b,P_{1}\right\rangle  & \text{with probability }\frac{b_{2}}{m}\\
\left\langle n^{\prime},m-b_{2},P_{1}\right\rangle  & \text{with probability }\frac{m-b_{2}}{m}
\end{cases}.
\end{equation*}
All random chances in this game are independent of one another.
Afterward it is Player 1's turn again and play repeats. Players continue
in this way until either player reduces his pool of candidates to
exactly 1. When this happens that player immediately wins.%
\footnote{In some versions of ``Guess Who?'', players are required to make
an additional ``special'' final guess, even if they have already
reduced the pool to a single individual. We do not consider this rule
set here. %
} That is to say that the following states are terminal states for
the game:
\begin{eqnarray*}
\forall m>1,\ \left\langle 1,m,P_{2}\right\rangle  & \leftrightarrow & \text{Player 1 immediately wins}\\
\forall n>1,\ \left\langle n,1,P_{1}\right\rangle  & \leftrightarrow & \text{Player 2 immediately wins}.
\end{eqnarray*}
The state $\left\langle 1,1,P_{i}\right\rangle $ is undefined because
there is no way to reach this state without first passing through
one of the previously defined terminal states.
\end{defn}

\begin{rem}
Definition \ref{def:MathGuessWho} puts the game of ``Guess Who?''
in the framework of a \emph{Simple Stochastic Game} as defined by Condon 
\cite{Condon}. These are also sometimes called $2\half$-player
games, where in addition to Player 1 and Player 2, the randomness
in the game is thought of as $\half$ of a player. Because of the
randomness inherent in these games there is no strategy for either
player that guarantees victory. (Indeed, in ``Guess Who?'', the
opponent can always make a bid of 1, and if they are lucky, will immediately
win.) \end{rem}
\begin{defn}
The theory of simple stochastic games going back to Shapley \cite{Shapley} and Condon \cite{Condon} show the existence of an optimal bidding strategy we will denote
by $b^{\star}:\bN\times\bN\to\bN$ and an optimal probability function
we will denote by $p^{\star}:\bN\times\bN\to[0,1]$ that is optimal
for Player 1 in the sense that%
\footnote{This function $p^{\star}(n,m)$ is sometimes called the \emph{value}
of the node $\left\langle n,m,P_{1}\right\rangle $ and the strategy
$b^{\star}$ is called a \emph{positional} strategy since it depends
only on the current position.%
}:\end{defn}
\begin{itemize}
\item If Player 1 makes a bid of $b^{\star}(n,m)$ whenever the state $\left\langle n,m,P_{1}\right\rangle $
is encountered, then no matter what strategy Player 2 chooses, Player
1 wins with probability $\geq p^{\star}(n,m)$.
\item If Player 1 uses any other bidding strategy whatsoever at the state
$\left\langle n,m,P_{1}\right\rangle $, then Player 2 has a strategy
that ensures that Player 1 wins with probability $\leq p^{\star}(n,m)$.
\end{itemize}

When both players play their optimal strategies, Player 1 will win
with probability exactly $p^{\star}(n,m)$. Since ``Guess Who?''
is symmetric between Player 1 and Player 2 (in the sense that the
position $\left\langle n,m,P_{1}\right\rangle $ is functionally identical
to the position $\left\langle m,n,P_{2}\right\rangle $), the optimal
bidding function for Player 2 from $\left\langle n,m,P_{2}\right\rangle $
is $b^{\star}(m,n)$ and Player 2's optimal probability to win from
$\left\langle n,m,P_{2}\right\rangle $ is $p^{\star}(m,n)$. By the
same token, Player 1's probability to win from $\left\langle n,m,P_{2}\right\rangle $
is $1-p^{\star}(m,n)$ since Player 1 wins if and only if Player 2
loses. Both players strategies and bids can be encapsulated in a single
function. Without loss of generality then, we will thus normally take
the point of view of Player 1 in our analysis. Because of this symmetry,
the optimal probability function satisfies a nice recurrence relation:
\begin{prop}
\label{prop:Recursion}$p^{\star}(n,m)$ and $b^{\star}(n,m)$ satisfy
the following recurrence relation:
\begin{eqnarray}
p^{\star}(n,m) & = & \max_{b\in\left[1,n-1\right]}\left\{1-\frac{b}{n}p^{\star}(m,b)-\frac{n-b}{n}p^{\star}(m,n-b)\right\}\\
b^{\star}(n,m) & = & \arg \max_{b\in\left[1,n-1\right]}\left\{1-\frac{b}{n}p^{\star}(m,b)-\frac{n-b}{n}p^{\star}(m,n-b)\right\}.
\end{eqnarray}
(For the case the argmax is not unique: any $b$ that maximizes the
function will work for Player 1's optimal strategy)\end{prop}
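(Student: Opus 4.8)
The plan is to derive the recurrence directly from the dynamic-programming (Bellman) characterization of the value of a simple stochastic game, combined with the Player-1/Player-2 symmetry recorded just above the statement. Since Shapley \cite{Shapley} and Condon \cite{Condon} guarantee that $p^{\star}(n,m)$ is the value of the node $\left\langle n,m,P_{1}\right\rangle $ and that an optimal positional strategy $b^{\star}$ exists, the value at a decision node equals the maximum, over all legal actions, of the expected value of the successor node. The only work is to write this one-step lookahead explicitly and simplify.

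First I would read off the successor structure from Definition \ref{def:MathGuessWho}: when Player 1 is at $\left\langle n,m,P_{1}\right\rangle $ and bids $b\in[1,n-1]$, the state moves to the Player-2 node $\left\langle b,m,P_{2}\right\rangle $ with probability $\frac{b}{n}$ and to $\left\langle n-b,m,P_{2}\right\rangle $ with probability $\frac{n-b}{n}$. Writing $V(\cdot)$ for Player 1's optimal probability of winning from the indicated node, the optimality principle gives
\[
p^{\star}(n,m)=\max_{b\in[1,n-1]}\left\{ \frac{b}{n}\,V\bigl(\left\langle b,m,P_{2}\right\rangle \bigr)+\frac{n-b}{n}\,V\bigl(\left\langle n-b,m,P_{2}\right\rangle \bigr)\right\} .
\]
Next I would invoke the symmetry observation from the paragraph preceding the statement: from a node $\left\langle n^{\prime},m,P_{2}\right\rangle $ it is the opponent's turn, and that position is functionally identical to $\left\langle m,n^{\prime},P_{1}\right\rangle $ seen from the opponent's seat, so Player 1 wins there iff the opponent loses, giving $V\bigl(\left\langle n^{\prime},m,P_{2}\right\rangle \bigr)=1-p^{\star}(m,n^{\prime})$. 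Substituting this for both successors and using $\frac{b}{n}+\frac{n-b}{n}=1$ collapses the bracketed expression to $1-\frac{b}{n}p^{\star}(m,b)-\frac{n-b}{n}p^{\star}(m,n-b)$, which is exactly the claimed formula. The identity for $b^{\star}(n,m)$ is then immediate, since an optimal bid is by definition a maximizer of the very same quantity.

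The one genuinely delicate point is the terminal node $\left\langle 1,m,P_{2}\right\rangle $, which is reached in one branch whenever $b=1$ or $b=n-1$: there Player 1 has already reduced their pool to a single candidate and wins outright, so $V\bigl(\left\langle 1,m,P_{2}\right\rangle \bigr)=1$. I would verify that this is consistent with the convention $p^{\star}(m,1)=0$ (the node $\left\langle m,1,P_{1}\right\rangle $ being the terminal state where the opponent has already lost), so that the substitution $V\bigl(\left\langle 1,m,P_{2}\right\rangle \bigr)=1-p^{\star}(m,1)$ remains valid. With this checked, the formula holds uniformly across the whole range $b\in[1,n-1]$ and no separate boundary cases are required; everything after the symmetry substitution is routine algebra rather than a real obstacle.
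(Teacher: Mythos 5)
Your proposal is correct and follows essentially the same route as the paper: condition on the first bid, use the Player 1/Player 2 symmetry to write each successor value as $1-p^{\star}(m,\cdot)$, and characterize $p^{\star}(n,m)$ as the maximum of the resulting expression over $b\in[1,n-1]$. The only cosmetic differences are that you invoke the general Bellman optimality principle from the simple stochastic game literature where the paper spells out the two-sided argument directly (the optimal strategy is no worse than any fixed bid, and some bid must be made), and you add an explicit consistency check at the terminal states, which the paper leaves implicit.
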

\begin{proof}
Let $p_{b}(n,m)$ be the probability that Player 1 wins from the position
$\left\langle n,m,P_{1}\right\rangle $ if he bids $b$ at $\left\langle n,m,P_{1}\right\rangle $
and both players play optimally thereafter. By the rules of the game
$p_{b}(n,m)=\frac{b}{n}\left(1-p^{\star}(m,n)\right)+\frac{n-b}{n}\left(1-p^{\star}(m,n-b)\right)$
since with probability $\frac{b}{n}$ we move to the position $\left\langle b,m,P_{2}\right\rangle $
where Player 1's probability to win is $1-p^{\star}(m,n)$ and with
probability $\frac{n-b}{n}$ we move to the position $\left\langle n-b,m,P_{2}\right\rangle $
where Player 1's probability to win is $1-p^{\star}(m,n-b)$. 

It must be that $p^{\star}(n,m)=\max_{b\in[1,n-1]}p_{b}(n,m)$ since
Player 1 must bid some value of $b$ at the position $\left\langle n,m,P_{1}\right\rangle $,
and the optimal strategy is no worse than any fixed bid $b$. Similarly,
any $b^{\star}$ that has $p_{b^{\star}}(n,m)=p^{\star}(n,m)$ means that
bidding $b^{\star}$ at the position $\left\langle n,m,P_{1}\right\rangle $
is an optimal bid.\end{proof}
\begin{rem}
``Guess Who?'' has a nice structure because the sum $s=n+m$ is
a \emph{strictly decreasing} function as the game progresses. Indeed,
this sum is guaranteed to decrease by at least 1 on each players turn.
This observation, along with the initial data $\forall m>1, \ p^{\star}(1,m)=1$
and $\forall n>1,\ p^{\star}(n,1)=0$, mean that $p^{\star}(n,m)$,
$b^{\star}(n,m)$ can be explicitly computed inductively by the very
simple Algorithm \ref{alg:Compute} given below%
\footnote{Note that for some $(n,m)$ the value $b^{\star}(n,m)$ is not unique
and will depend on how argmax is implemented. %
}. At each stage $s$, the algorithm has already computed $p^{\star}(n,m)$
for all pairs $(n,m)$ with $n+m\leq s-1$. The values $p^{\star}(m,b)$
and $p^{\star}(m,n-b)$ which appear in the maximization have $m+b\leq s-1$
and $m+(n-b)$ since $1\leq b\leq n-1$ and $m+n=s$, and are hence
already known. The proof of the main Theorem \ref{thm:Main} also
uses this observation as part of a proof by induction. The output
when $r=64$ is displayed in Figure \ref{fig:Optimal} for $1\leq n,m\leq32$. 
\end{rem}

%%%%%%%%%%%%%%%%%%%%%%%%%%%%%%%%%
%ALGORITHM 1 FINAL PLACEMENT HERE
%%%%%%%%%%%%%%%%%%%%%%%%%%%%%%%%%

\begin{algorithm}[h]
\emph{Input:} $r\geq3$\emph{ Output:} The value $p^{\star}(n,m)$
and $b^{\star}(n,m)$ for all pairs $(n,m)$ with $3\leq n+m\leq r$

0. Initialize $p^{\star}(1,m)=1$ for all $m>1$ and $p^{\star}(n,1)=0$
for all $n>1$

1. \textbf{for} $s=3$ to $r$:

2. $\ \ $\textbf{for $(n,m)$ }with $n+m=s$:

3. $\ \ \ \ $$p^{\star}(n,m)\mbox{\ensuremath{\leftarrow}}\max_{b\in[1,n-1]}\left\{1-\frac{b}{n}p^{\star}(m,b)-\frac{n-b}{n}p^{\star}(m,n-b)\right\}$

4. $\ \ \ \ $$b^{\star}(n,m)\leftarrow\text{argmax}_{b\in\left[1,n-1\right]}\left\{1-\frac{b}{n}p^{\star}(m,b)-\frac{n-b}{n}p^{\star}(m,n-b)\right\}$

\caption{\label{alg:Compute}Numerically computing $p^{\star}(n,m)$ and $b^{\star}(n,m)$. }
\end{algorithm}

\subsection{``In the weeds'' and ``the upper hand''}
We now divide the statespace $\bS$ into some subsets which turn out
to be relevant for discussing the optimal strategy in the game. Except for the trivial states where $n=1$ or $m=1$, these subsets partition the entire statespace $\bS$ in the sense that each element of the statespace is in exactly one of the subsets. This
division was discovered by careful examination of the output of Algorithm
\ref{alg:Compute}.
\begin{defn}
For $k \in \bN \cup \{0\}$, define the set $W_{k,P_{1}}\subset\bS$ by:
\begin{equation}
W_{k,P_{1}}:=\left\{ \left\langle n,m,P_{1}\right\rangle \ :\ 2^{k+1} < n\text{ and }2^{k} < m\leq2^{k+1}\right\}. 
\end{equation}
When $\left\langle n,m,P_{1}\right\rangle \in W_{k,P_{1}}$, we say
that \textbf{Player 1 is in the weeds at level $k$}, or if $\left\langle n,m,P_{1}\right\rangle \in\bigcup_{k=0}^{\infty}W_{k,P_{1}}$we
say \textbf{Player 1 is in the weeds} without specifying which level.
Similarly, $W_{k,P_{2}}:=$ \newline$\left\{ \left\langle n,m,P_{2}\right\rangle \ :\ 2^{k+1} < m\text{ and }2^{k} <  n\leq2^{k+1}\right\} $
and we use the same terminology for Player 2.
\end{defn}

\begin{defn}
For $k\in \bN \cup \{0\}$, define the set $U_{k,P_{1}}\subset\bS$ by:
\begin{equation}
U_{k,P_{1}}:=\left\{ \left\langle n,m,P_{1}\right\rangle \ :\ 2^{k} < n\leq2^{k+1}\text{ and }2^{k}< m\right\} .
\end{equation}
When $\left\langle n,m,P_{1}\right\rangle \in U_{k,P_{1}}$, we say
that \textbf{Player 1 has the upper hand at level $k$}, or if $\left\langle n,m,P_{1}\right\rangle \in\bigcup_{k=0}^{\infty}U_{k,P_{1}}$we
say \textbf{Player 1 has the upper hand} without specifying which
level. Similarly, $U_{k,P_{2}}:=\left\{ \left\langle n,m,P_{2}\right\rangle \ :\ 2^{k} < m\leq2^{k+1}\text{ and }2^{k} < n\right\} $
and we use the same terminology for Player 2.
\end{defn}

With these two definitions solidified, the description of the optimal
strategy for the game is very simple: players should bid $2^{k}$
when in the weeds at level $k$ and should bid $\floor{\half n}$
when they have the upper hand.

\section{``Continuous Guess Who?''\label{sec:Asymptotics}}

The exact formula in Theorem \ref{thm:Main} is proven by induction,
which does does not shed much light on how the formula is discovered
or what the individual terms in it represent. To aid understanding,
we present the original method that led to the formula $p^{\star}(n,m)$. From numerically computing
$p^{\star}(n,m)$ and $b^{\star}(n,m)$ for some small values of $n,m$
using Algorithm \ref{alg:Compute}, we get an ansatz on the optimal
bidding strategy $b^{\star}(n,m)$. We then exactly calculate the probability for either player to win when they follow this bidding ansatz in a modified version of ``Guess Who?'' which is defined in such a way to remove small number effects. The probability of winning in this modified game turns out to be a good approximation to the probability of winning in ``Guess Who?''  in the asymptotic regime when the pool sizes $n,m$ are very large, and this is what first led to the formula $p^{\star}(n,m)$.

\begin{defn}
``Continuous Guess Who?'' is a two player game on the statespace:
\begin{equation}
\bS_{\infty}=\left\{ \left\langle x,y,P_{i}\right\rangle \ :\ x\in\bR^{> 0},y\in\bR^{> 0},i\in\{1,2\}\right\} .
\end{equation}
The partition of the statespace into  ``in the weeds'' and ``the upper hand'' are generalized to this continuous setting by allowing any exponent $k \in \bZ$:
\begin{eqnarray}
W^{\infty}_{k,P_{1}} &:=& \left\{ \left\langle 2^k \al,2^k \be,P_{1}\right\rangle \ :\  \al \in (2,\infty) \text{ and } \be \in (1,2] \right\}\subset \bS_\infty, k \in \bZ , \\
U^{\infty}_{k,P_{1}} &:=& \left\{ \left\langle 2^k \al,2^k \be,P_{1}\right\rangle \ :\ \al \in (1,2] \text{ and } \be \in (1,\infty) \right\}\subset \bS_\infty , k \in \bZ .
\end{eqnarray}
$W^{\infty}_{k,P_{2}}$, $U^{\infty}_{k,P_{2}}$, $k\in\bZ$ are defined analogously. Play proceeds in a similar way to ``Guess Who?'': on Player 1's turn, from the state $\left\langle x,y,P_{1}\right\rangle $, Player 1 is allowed to make 
\emph{any} (not necessarily an integer) size bid $b_1 \in (0,x)$ and the game state evolves stochastically
according to:
\begin{equation*}
\text{Starting at }\left\langle x,y,P_{1}\right\rangle \text{ with Player 1 bidding }b_{1}\to\begin{cases}
\left\langle b_{1},y,P_{2}\right\rangle  & \text{with probability }\frac{b_{1}}{x}\\
\left\langle x-b_{1},y,P_{2}\right\rangle  & \text{with probability }\frac{x-b_{1}}{x}
\end{cases}.
\end{equation*}
On Player 2's turn he makes a bid $b_2 \in (0,y)$ and the game state evolves analogously. All of these random outcomes are independent.

The game has no ending point: players continue playing indefinitely. Instead, the events $\{ \text{Player 1 loses} \}$, $\{ \text{Player 2 loses} \}$, \{$\text{The game is a draw}\}$ are defined abstractly as events on the underlying probability space. The event that Player 1 loses is defined to be the event that Player 1 is in the weeds ``almost always''; that is there exists some time beyond which Player 1 is trapped in the weeds forevermore. Player 2 losing is defined analogously. The event that the game is a draw is defined to be the event that both players each alternate being in the weeds infinitely often. Denoting the state of the game after $t$ steps as $S_t \in \bS_\infty$, this is written:
\begin{eqnarray*}
\{ \text{Player 1 loses} \} &:=& \left\{ \exists T \text{ such that } \forall t > T, S_t \in \bigcup_{k=-\infty}^{\infty} W^{\infty}_{k,1} \text{ on Player 1's turns} \right\} \\
\{ \text{Player 2 loses} \} &:=& \left\{ \exists T \text{ such that } \forall t > T, S_t \in \bigcup_{k=-\infty}^{\infty} W^{\infty}_{k,2} \text{ on Player 2's turns} \right\} \\
\{ \text{The game is a draw} \} &:=& \left\{ S_t \in \bigcup_{k=-\infty}^{\infty} W^{\infty}_{k,1} \text{  i.o. and } S_t \in \bigcup_{k=-\infty}^{\infty} W^{\infty}_{k,2} \text{ i.o.} \right\}.
\end{eqnarray*}
We define also $\{ \text{Player 2 wins} \} := \{ \text{Player 1 loses} \}$ and $\{ \text{Player 1 wins} \} := \{ \text{Player 2 loses} \}$.

\end{defn}

\begin{rem}
``Continuous Guess Who?'' is a very peculiar game because of the strange way that winning is defined. Indeed, the players cannot know if they have won or lost the game after finitely many rounds of the game. (There is always a positive probability the trailing player will recover and win, no matter how dire their situation may become.) Practically speaking, one should think of ``Continuous Guess Who?'' only as a tool that closely models ordinary ``Guess Who?'' when the length of the game is very long. In the remaining analysis in this section, we will show that when both players play according to an optimal bidding ansatz, the probabilities of winning from any state can be calculated exactly and the probability of a draw is 0. 
\end{rem}

\begin{defn}[Optimal Bidding Ansatz]
We assume both players play according to the following strategy: 
\begin{itemize}
\item When Player 1 is in the weeds at level $k$, $\left\langle 2^k \al, 2^k \be, P_1\right\rangle \in W^{\infty}_{k,P_1}$, Player 1 makes a bid of $2^k$. 
\item When Player 1 has the upper hand at level $k$, $\left\langle x, y, P_1\right\rangle \in U^{\infty}_{k,P_1}$, Player 1 makes a bid of $\frac{1}{2} x$.
\end{itemize}
Player 2 plays analogously. This ansatz was hypothesized by examining output from Algorithm \ref{alg:Compute}.
\end{defn}

\begin{defn}
Define $p_{\infty}^{\star}: \bR^{>0} \times \bR^{>0} \to (0,1)$ to be the probability that Player 1 wins starting from the position $\left\langle x,y,P_1 \right\rangle$ on his turn when both players play according to the optimal bidding ansatz:
\begin{equation}
p_{\infty}^{\star}(x,y) := \p\left(\text{Player 1 wins from }\left\langle x,y,P_{1}\right\rangle \right).
\end{equation}
\end{defn}
\begin{lem}
\label{lem:2scale} For any $j\in\bZ$, $p_{\infty}^{\star}(x,y)$ satisfies:
\begin{equation}
p_{\infty}^{\star}(x,y) = p_{\infty}^{\star}\left(2^{j} x,2^{j} y\right).
\end{equation}
\end{lem}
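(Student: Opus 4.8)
The plan is to exhibit the scaling map $\Phi_{j}:\left\langle x,y,P_{i}\right\rangle \mapsto\left\langle 2^{j}x,2^{j}y,P_{i}\right\rangle $ as a symmetry of the entire game and then observe that the winning events are invariant under $\Phi_{j}$. Note first that $\Phi_{j}$ is a bijection of $\bS_{\infty}$ that preserves the player token. I would then record three equivariance facts, each immediate from the definitions. \textbf{(i) Covariance of the partition:} since $\left\langle 2^{k}\al,2^{k}\be,P_{1}\right\rangle \in W^{\infty}_{k,P_{1}}$ exactly when $\al\in(2,\infty)$, $\be\in(1,2]$, applying $\Phi_{j}$ gives $\left\langle 2^{k+j}\al,2^{k+j}\be,P_{1}\right\rangle \in W^{\infty}_{k+j,P_{1}}$; the same level shift $k\mapsto k+j$ holds for the $U^{\infty}$ regions and for both players. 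In particular $\bigcup_{k}W^{\infty}_{k,P_{1}}$ and $\bigcup_{k}W^{\infty}_{k,P_{2}}$ are each $\Phi_{j}$-invariant, even though individual levels shift. \textbf{(ii) Equivariance of the ansatz:} if the optimal bidding ansatz prescribes a bid $b$ at a state $S$, it prescribes the bid $2^{j}b$ at $\Phi_{j}(S)$: in the weeds at level $k$ the bid $2^{k}$ becomes $2^{k+j}=2^{j}\cdot2^{k}$ at level $k+j$, while with the upper hand the bid $\half x$ becomes $\half(2^{j}x)=2^{j}\cdot\half x$. \textbf{(iii) Equivariance of the kernel:} bidding $2^{j}b$ from $\Phi_{j}(\left\langle x,y,P_{1}\right\rangle )$ sends the state to $\left\langle 2^{j}b,2^{j}y,P_{2}\right\rangle =\Phi_{j}(\left\langle b,y,P_{2}\right\rangle )$ with probability $\tfrac{2^{j}b}{2^{j}x}=\tfrac{b}{x}$ and to $\Phi_{j}(\left\langle x-b,y,P_{2}\right\rangle )$ with the complementary probability $\tfrac{x-b}{x}$, exactly matching the unscaled transition probabilities, and symmetrically on Player 2's turns.

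With these in hand, the second step is a coupling. Running both games on a common probability space driven by the same sequence of independent $\text{Unif}(0,1)$ variables $U_{1},U_{2},\ldots$ (at each turn, $U_{t}$ selects the ``yes'' branch when it falls below the branch probability), I would prove by induction on $t$ that the trajectory $\tilde{S}_{t}$ started from $\left\langle 2^{j}x,2^{j}y,P_{1}\right\rangle $ satisfies $\tilde{S}_{t}=\Phi_{j}(S_{t})$ almost surely, where $S_{t}$ is the trajectory started from $\left\langle x,y,P_{1}\right\rangle $. The base case is the definition of the two starting states. For the inductive step, given $\tilde{S}_{t}=\Phi_{j}(S_{t})$, fact (ii) shows the ansatz bids at the two states are related by the factor $2^{j}$, and fact (iii) shows that the \emph{same} random outcome $U_{t}$ then routes both chains to states again related by $\Phi_{j}$, the two branch probabilities being equal.

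Finally, by (i) the event $\{S_{t}\in\bigcup_{k}W^{\infty}_{k,P_{1}}\}$ holds if and only if $\{\tilde{S}_{t}=\Phi_{j}(S_{t})\in\bigcup_{k}W^{\infty}_{k,P_{1}}\}$ holds, and likewise for Player 2's weeds; since $\Phi_{j}$ preserves the player token, the restriction ``on Player $i$'s turns'' is preserved as well. Hence the tail events $\{\text{Player 1 loses}\}$ and $\{\text{Player 2 loses}\}$ coincide for the two coupled trajectories, so $\{\text{Player 1 wins}\}=\{\text{Player 2 loses}\}$ is $\Phi_{j}$-invariant. Taking probabilities yields
\[
p_{\infty}^{\star}(2^{j}x,2^{j}y)=\pp{\text{Player 1 wins from }\left\langle 2^{j}x,2^{j}y,P_{1}\right\rangle }=\pp{\text{Player 1 wins from }\left\langle x,y,P_{1}\right\rangle }=p_{\infty}^{\star}(x,y).
\]

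I expect the only genuine subtlety to be the rigorous coupling in the second step: one must put both chains on a single probability space so that one source of randomness simultaneously drives both, and check that the consistent scaling of bids in (ii) together with the matching branch probabilities in (iii) really does let a single uniform variable determine both transitions at once. Everything else is direct substitution into the definitions, and the scale-invariance of the winning event is then automatic from (i).
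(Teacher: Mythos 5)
Your proof is correct and follows essentially the same route as the paper: both exhibit the scaling map $\langle x,y,P_i\rangle \mapsto \langle 2^j x, 2^j y, P_i\rangle$ as a symmetry that shifts the weeds/upper-hand partition levels by $j$, scales the ansatz bids by $2^j$, and leaves the transition probabilities unchanged, then conclude that the winning events (defined via the weeds) have equal probability. Your explicit coupling via shared uniform variables is a slightly more careful rendering of the paper's claim that $R_t \dequal \ph(S_t)$ --- it upgrades the distributional identity to a pathwise one, which is a clean way to handle the fact that the winning events depend on the whole trajectory rather than on any fixed time --- but it is a refinement of the same argument, not a different one.
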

\begin{proof}
Define the map $\ph: \left\langle x, y, P_i\right\rangle \to \left\langle 2^{j}x , 2^{j}y , P_i\right\rangle$ and we will show that the winning probability $p_{\infty}^{\star}$ is invariant under the map $\ph$. Notice firstly that $\ph$ sends the sets $W^{\infty}_{k,P_i} \to W^{\infty}_{k+j,P_i}$ and $U^{\infty}_{k,P_i} \to U^{\infty}_{k+j,P_i}$. Thus this map has no effect on whether Player 1 is in the weeds or has the upper hand. We next observe that under the optimal bidding ansatz, the probability of a bid being a success is also invariant under the map $\ph$. Indeed, when Player 1 has the upper hand, he makes a no-risk bid and his pool size is plainly halved; when Player 1 is in the weeds at level $k$, $\left\langle 2^k \al, 2^k \be, P_1\right\rangle \in W^{\infty}_{k,P_1}$, $\al > 2$, $\be \in (1,2]$, he makes a bid of size $2^k$ and his pool size after the bid is:
\begin{equation*}
\begin{cases}
2^{k} & \text{ with probability }\frac{2^{k}}{2^{k}\al}\\
2^{k}\al-2^{k} & \text{ with probability }\frac{2^{k}\al-2^{k}}{2^{k}\al}
\end{cases}.
\end{equation*}
The factor $2^k$ cancels, and this probability depends only on $\al$, which is invariant under $\ph$.
 From the above considerations we see that if we let $S_t$ be the state after $t$ rounds started from $ \left\langle x, y, P_1\right\rangle$ and we let $R_t$ to be the state after $t$ rounds started from $ \left\langle 2^{j}x,2^{j}y, P_1\right\rangle $, then we have equality in distribution:
\begin{equation*}
R_t \dequal \ph\left( S_t \right).
\end{equation*}
From this invariance, along with the fact that $\ph$ does not change whether or not Player 1 is in the weeds, and the very definition of Player 1 winning the game, we conclude that $p_{\infty}^{\star}$ is invariant under the map $\ph$ as desired.
\end{proof}

\begin{lem}
\label{lem:EverExit} Assume both players play ``Continuous Guess Who?'' according to the optimal
bidding ansatz. Suppose we start from the position $\left\langle 2^{k}\al,2^{k}\be,P_{1}\right\rangle $
with $\al>2$ and $\be\in(1,2]$, so that Player 1 is in the weeds
at level $k\in\bZ$. Then, the probability that Player 1 \uline{ever}
exits the weeds at any point throughout the game is $\dfrac{2}{\al}$.\end{lem}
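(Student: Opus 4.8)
The plan is to exploit the scale invariance of Lemma \ref{lem:2scale} to normalize the level to $k=0$, so that it suffices to compute the exit probability starting from $\langle \al,\be,P_1\rangle$ with $\al>2$ and $\be\in(1,2]$. I would then analyze the game one full round at a time (a Player 1 move followed by a Player 2 move), showing that while Player 1 stays in the weeds the round dynamics collapses to a single deterministic update of the parameter $\al$, with $\be$ playing no role. The whole computation then reduces to evaluating one infinite product.

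First I would pin down the one-round dynamics. At $\langle \al,\be,P_1\rangle$ Player 1 is in the weeds at level $0$, so the ansatz forces a bold bid of $2^0=1$. With probability $\frac{1}{\al}$ the bid succeeds and Player 1's pool drops to $1$; at the resulting state $\langle 1,\be,P_2\rangle$ one has $\lceil \log_2 \be\rceil = 1 > 0 = \lceil \log_2 1\rceil$, so Player 2 is now the player in the weeds, i.e.\ Player 1 has exited. With probability $\frac{\al-1}{\al}$ the bid fails and the pool becomes $\al-1>1$; here I would check that Player 2 has the upper hand at $\langle \al-1,\be,P_2\rangle$, so the ansatz forces Player 2 to make the safe halving bid $\half\be$. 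The key simplification is that a halving bid makes both of Player 2's stochastic outcomes coincide (the pool becomes $\be/2$ either way), so Player 2's move is deterministic and lands at $\langle \al-1,\be/2,P_1\rangle$. Applying Lemma \ref{lem:2scale} with $j=1$, this is equivalent to $\langle 2(\al-1),\be,P_1\rangle$: Player 1 is back in the weeds at level $0$ with the same $\be$ and with $\al$ replaced by $2\al-2$, so the same analysis applies to every subsequent round.

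From this structure I would argue that, since a failed bold bid always keeps Player 1 in the weeds and a successful one is the only way out, the event $\{\text{Player 1 never exits}\}$ is exactly the event that the bold bid fails on every round. Setting $\al_0=\al$ and $\al_{j+1}=2\al_j-2$, the successive failures are independent with probabilities $\frac{\al_j-1}{\al_j}$, so $\p(\text{never exit}) = \prod_{j=0}^{\infty}\frac{\al_j-1}{\al_j}$. The recursion gives $\al_j = 2 + 2^j(\al-2)$, and the identity $\al_j-1 = \half\al_{j+1}$ telescopes the product: $\prod_{j=0}^{N-1}\frac{\al_j-1}{\al_j} = \prod_{j=0}^{N-1}\frac{\al_{j+1}}{2\al_j} = \frac{\al_N}{2^N\al_0} = \frac{2+2^N(\al-2)}{2^N\al} \to \frac{\al-2}{\al}$ as $N\to\infty$. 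Hence $\p(\text{ever exit}) = 1-\frac{\al-2}{\al} = \frac{2}{\al}$, as claimed.

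The step needing the most care is the reduction of each round to the deterministic update $\al\mapsto 2\al-2$: I must confirm that throughout the in-the-weeds phase Player 2 genuinely always has the upper hand (so the halving bid is forced), that the halving really does collapse Player 2's randomness, and that no exit can occur on a Player 2 turn — all of which follow from comparing the dyadic brackets $\lceil \log_2 n\rceil$ and $\lceil \log_2 m\rceil$ and are preserved under the rescaling. I would also note that since $\al_j\to\infty$ the failure probabilities tend to $1$ fast enough that the infinite product is strictly positive; this is precisely why $\p(\text{ever exit})=\frac{2}{\al}<1$, reflecting that a player who falls far enough behind has a genuine chance of being trapped in the weeds forever.
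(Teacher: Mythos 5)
Your proof is correct and takes essentially the same route as the paper: both arguments reduce the game to one round at a time, observing that a failed bold bid followed by Player 2's deterministic halving rescales (via Lemma \ref{lem:2scale}) to a fresh in-the-weeds position with $\al \mapsto 2(\al-1)$ and $\be$ unchanged. The only cosmetic difference is that you compute the complementary event $\p(\text{never exit})$ as a telescoping infinite product $\prod_j \frac{\al_j - 1}{\al_j} \to \frac{\al-2}{\al}$, whereas the paper telescopes the sum of per-round escape probabilities $\frac{1}{\al} + \frac{1}{2\al} + \frac{1}{4\al} + \cdots = \frac{2}{\al}$.
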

\begin{proof}
Since Player 1 is in the weeds at level $k$, his first bid is $2^{k}$.
This means that his pool size becomes:
\begin{equation*}
\begin{cases}
2^{k} & \text{ with probability }\frac{2^{k}}{2^{k}\al}\\
2^{k}\al-2^{k} & \text{ with probability }\frac{2^{k}\al-2^{k}}{2^{k}\al}
\end{cases}.
\end{equation*}
If his pool size becomes $2^{k}$, the pool sizes are now $x=2^{k}$,$y\geq2^{k+1}$
meaning that Player 1 now has escaped the weeds and has the upper
hand. Thus Player 1 escapes from the weeds on the first guess with
probability $\frac{2^{k}}{2^{k}\al}=\frac{1}{\al}$. 

If Player 1 fails to escape the weeds on this turn, his new pool size
is $2^{k}\al-2^{k}=2^{k-1}\left(2(\al-1)\right)$. At this stage Player
2 has the upper hand, and will make a bid of exactly $\half2^{k}\be$
and so Player 2's new pool size is halved and will be exactly $2^{k-1}\be$. Thus,
if Player 1 fails to escape the weeds at this level, the game state
moves to $x=2^{k-1}\left(2(\al-1)\right)$ and $y=2^{k-1}\be$. By Lemma \ref{lem:2scale} the probability of winning is invariant under changing $k$, and so this position is handled by the same analysis
as above with a new $\al$-value which is $\al^{\prime}=2(\al-1)$.
Player 1's probability to escape the weeds at this stage is $\frac{1}{\al^{\prime}}=\frac{1}{2(\al-1)}$.
This analysis can be repeated over and over again. Finally, to find
the probability Player 1 ever escapes the weeds, we sum up his probability
to escape at each stage:
\begin{eqnarray*}
 &  & \p\left(\text{Player 1 ever escapes the weeds}\right)\\
 & = & \frac{1}{\al}+\left(1-\frac{1}{\al}\right)\left(\frac{1}{2(\al-1)}+\left(1-\frac{1}{2(\al-1)}\right)\left(\frac{1}{2\left(2(\al-1)-1\right)}+\ld\right)\right).
\end{eqnarray*}
Fortunately this expansion telescopes and we remain with:
\begin{eqnarray*}
 &  & \p\left(\text{Player 1 ever escapes the weeds}\right)\\
 & = & \frac{1}{\al}+\left(\frac{\al-1}{\al}\right)\frac{1}{2(\al-1)}+\left(\frac{\al-1}{\al}\right)\left(\frac{2(\al-1)-1}{2(\al-1)}\right)\left(\frac{1}{{\scriptstyle 2\left(2(\al-1)-1\right)}}\right)+\ld\\
 & = & \frac{1}{\al}+\frac{1}{2\al}+\frac{1}{4\al}+\ld.
\end{eqnarray*}
Evaluating this infinite sum gives the desired result of $\dfrac{2}{\al}$. 
\end{proof}
Next we prove a lemma about the position $\left\langle 2^{k}\cdot4,2^{k}\cdot2,P_{1}\right\rangle $.
This position is important because it is naturally encountered when
both players play according to the optimal bidding ansatz. 
\begin{lem}
\label{lem:SpecialPos} Assume both players play ``Continuous Guess Who?'' according to the optimal
bidding ansatz. Starting from the position $\left\langle 2^{k}\cdot4,2^{k}\cdot2,P_{1}\right\rangle $
(i.e. $\al=4,\be=2)$, the probability Player 1 loses the game is
$\frac{2}{3}$ and the probability of a draw is $0$.\end{lem}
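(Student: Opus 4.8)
\emph{Proof proposal.} The plan is to exploit the fact that $\langle 2^{k}\cdot 4,2^{k}\cdot 2,P_{1}\rangle$ is a fixed point, up to rescaling and interchanging the two players' roles, of the ``hand-off'' dynamics that occur each time the trailing player escapes the weeds. By Lemma \ref{lem:2scale} I may take $k=0$ and work from $\langle 4,2,P_{1}\rangle$. I would set $\ell:=\pp{\text{Player 1 loses from }\langle 4,2,P_{1}\rangle}$, $w:=\pp{\text{Player 1 wins}}$ and $d:=\pp{\text{draw}}$, so that $\ell+w+d=1$, and aim to show $\ell=\frac{2}{3}$ and $d=0$.

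First I would pin down exactly where Player 1 lands when he escapes. Here Player 1 is in the weeds at level $0$ with $\al=4$, $\be=2$. Re-examining the telescoping argument in the proof of Lemma \ref{lem:EverExit}, I note that on each failed escape the opponent has the upper hand and makes a no-risk bid that deterministically halves its pool; hence the parameter $\be$ is \emph{preserved} throughout the entire weeds phase while $\al\mapsto 2(\al-1)$. Consequently, at the moment Player 1 does escape --- which by Lemma \ref{lem:EverExit} happens with total probability $\frac{2}{\al}=\frac12$ --- the state is $\langle 2^{-j},2^{-j}\cdot 2,P_{2}\rangle$ for some $j\ge 0$, which by Lemma \ref{lem:2scale} is the single state $\langle 1,2,P_{2}\rangle$. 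Checking dyadic levels shows that at $\langle 1,2,P_{2}\rangle$ it is now Player 2 who is in the weeds, again with parameters $\al=4$, $\be=2$: precisely the mirror image of the starting position.

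This self-similarity closes a recursion. If Player 1 never escapes (probability $\frac12$) then he is in the weeds on every one of his turns while the opponent, always halving, is never in the weeds; this event lies inside $\{\text{Player 1 loses}\}$ and is disjoint from the draw. If he does escape (probability $\frac12$), the Markov property restarts the game from $\langle 1,2,P_{2}\rangle$, from which Player 1's chance to lose equals Player 2's chance to win from its own identical weeds-configuration, namely $w$ by the player-symmetry of the game. Hence $\ell=\frac12+\frac12 w$. To eliminate the draw I would iterate the hand-off: each escape delivers the identical $(\al,\be)=(4,2)$ weeds-configuration to the other player, who again escapes with probability exactly $\frac12$; a draw occurs if and only if the escapes alternate forever, so $\pp{\text{at least }n\text{ escapes}}=2^{-n}\to 0$ and $d=0$. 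Then $w=1-\ell$, and substituting gives $\frac32\ell=1$, i.e. $\ell=\frac23$. Equivalently, the loser is whoever first fails to escape, and summing the geometric series $\sum_{j\ge0}2^{-(2j+1)}$ recovers $\frac23$ directly.

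The main obstacle is the first step: recognizing and justifying that an escape always returns the game --- after rescaling by Lemma \ref{lem:2scale} and swapping the players --- to a state identical to the start. This rests on the exact preservation of $\be$ during the weeds phase and on the dyadic-level bookkeeping that makes $\langle 1,2,P_{2}\rangle$ again a weeds position with $\al=4$; it is exactly the coincidence $2\be=\al$ at $(\al,\be)=(4,2)$ that makes this configuration a fixed point of the hand-off map and renders the recursion closed. Everything downstream is routine: the Markov restart, the geometric bound killing the draw, and a two-line linear solve.
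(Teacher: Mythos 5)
Your proposal is correct and follows essentially the same route as the paper's own proof: both use Lemma \ref{lem:EverExit} to get the escape probability $\frac{2}{\al}=\frac{1}{2}$, both verify (by tracking that the upper-hand player's pool halves deterministically while the weeds player's bid is the current dyadic level) that each escape hands off the rescaled mirror configuration $(\al,\be)=(4,2)$ to the other player, and both eliminate the draw via the geometric decay of repeated escapes. The only cosmetic difference is that you close the recursion after one hand-off using player symmetry, solving $\ell=\frac{1}{2}+\frac{1}{2}w$ together with $w=1-\ell$, whereas the paper unrolls two hand-offs to return to a Player-1 weeds position and solves $\ell=\frac{1}{2}+\frac{1}{4}\ell$ directly (which, as a side benefit, does not require the draw analysis as an input to the linear solve).
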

\begin{proof}
Player 1 can lose the game in one of two disjoint ways:

a) Player 1 never gets out of the weeds. The probability that Player
1 never gets out of the weeds is $1-\frac{2}{4}$ by Lemma \ref{lem:EverExit}. 

b) Player 1 successfully gets out of the weeds at some point and puts
Player 2 in the weeds; then Player 2 himself gets out of the weeds;
and finally Player 1 subsequently loses the game from that
position. The probability of this chain of events is calculated as follows: by Lemma \ref{lem:EverExit}, with probability $\frac{2}{4}$, Player 1 does get out of the weeds at some point, and the position will then
be $x=2^{\ell}\cdot2$ and $y=2^{\ell}\cdot4$ for some $\ell$, (i.e.
the game evolves to $\al^{\prime}=2,\be^{\prime}=4)$. The position must be of this form because Player 2's total is always a power of 2 up to this point (he started at a power of 2 and is always halving his pool up to this point) and Player 1's bid is the largest power of 2 which is strictly less than Player 2's pool size. From this new position, the probability that Player 2 ever gets out of the weeds from this point is $\frac{2}{4}$ by Lemma \ref{lem:EverExit}.
If Player 2 does successfully get out of the weeds, the position
will be $x=2^{j}\cdot4$ and $y=2^{j}\cdot2$ for some $j$ by the same reasoning as above, (i.e.
we the state has evolved back to the original position $\al^{\prime\prime}=4,\be^{\prime\prime}=2$).
Thus we have:
\begin{eqnarray*}
 &  & \p\left(\text{P1 loses from }\left\langle 2^{k}\cdot4,2^{k}\cdot2,P_{1}\right\rangle \right)\\
 & = & \p\left(\text{\small P1 never escapes}\right)+\p\left(\text{\small P1 escapes}\right)\p\left(\text{\small P2 escapes}\right)\p\left(\text{\small P1 loses from }{\scriptstyle \left\langle 2^{j}\cdot4,2^{j}\cdot2,P_{1}\right\rangle }\right)\\
 & = & \left(1-\frac{2}{4}\right)+\frac{2}{4}\cdot\frac{2}{4}\cdot\p\left(\text{P1 loses from }\left\langle 2^{j}\cdot4,2^{j}\cdot2,P_{1}\right\rangle \right)\\
 & = & \half+\oo 4\p\left(\text{P1 loses from }\left\langle 2^{j}\cdot4,2^{j}\cdot2,P_{1}\right\rangle \right).
\end{eqnarray*}

By Lemma \ref{lem:2scale}, the positions $\left\langle 2^{k}\cdot4,2^{k}\cdot2,P_{1}\right\rangle $
and $\left\langle 2^{j}\cdot4,2^{j}\cdot2,P_{1}\right\rangle $ are
identical with regard to losing probability. Solving the linear equation $p=\half+\oo 4p$ gives $p=\frac{2}{3}$,
as desired. The probability of a draw is 0 because the probability of switching from one player being in the weeds to the other being in the weeds from this position has probability $\frac{2}{4}$ each time, and so having infinity many switches is a probability 0 event by the Borel-Cantelli lemma.\end{proof}

\begin{prop}
\label{prop:InTheWeeds} Assume both players play ``Continuous Guess Who?'' according to the optimal
bidding ansatz. Suppose we are at the position $\left\langle x,y,P_{1}\right\rangle =\left\langle 2^{k}\al,2^{k}\be,P_{1}\right\rangle $
with $\al>2$ and $\be\in(1,2]$, so that Player 1 is in the weeds
at level $k\in\bZ$. Then the probability of a draw is $0$ and the probability that Player 1 wins the game is:
\begin{equation}
p_{\infty}^{\star}(x,y) = \frac{2^{k+1}}{x}-\frac{2}{3}\frac{2^{2k+1}}{xy}.
\end{equation}
\end{prop}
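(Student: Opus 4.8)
The plan is to exploit the scale invariance of Lemma~\ref{lem:2scale} to fix the level, and then follow the ``escape race'' dynamics until the game funnels into the already-solved special position of Lemma~\ref{lem:SpecialPos}.

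First I would use Lemma~\ref{lem:2scale} to reduce to $k=0$. Writing $x=2^{k}\al$ and $y=2^{k}\be$, the claimed identity $p_{\infty}^{\star}(x,y)=\frac{2^{k+1}}{x}-\frac23\frac{2^{2k+1}}{xy}$ is, after cancelling the powers of $2$, equivalent to $p_{\infty}^{\star}(\al,\be)=\frac{2}{\al}-\frac{4}{3\al\be}$ for $\al>2$ and $\be\in(1,2]$. So it suffices to establish the latter (together with a vanishing draw probability) from the normalized start $\left\langle\al,\be,P_{1}\right\rangle$.

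The heart of the argument is identifying the position at which Player 1 first escapes the weeds. While Player 1 fails to escape, each round Player 2 has the upper hand and halves his pool, so Player 2's normalized mantissa $\be$ is preserved while Player 1's parameter evolves by $\al\mapsto 2(\al-1)$, exactly the recursion underlying Lemma~\ref{lem:EverExit}. I would check that Player 2 really has the upper hand at each such round, which holds because $2(\al-1)>2$ whenever $\al>2$, so this structure persists until escape. Consequently, if Player 1 escapes (probability $\frac{2}{\al}$ by Lemma~\ref{lem:EverExit}) his pool lands on a clean power of $2$ while Player 2's pool is that same power times $\be$; the position is $\left\langle 2^{\ell},2^{\ell}\be,P_{2}\right\rangle$, which by Lemma~\ref{lem:2scale} is the Player-2-in-the-weeds position with Player 2's own parameter equal to $2\be>2$ and Player 1's parameter equal to $2$. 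The crucial point, and the step I expect to be the main obstacle to state cleanly, is that this landing position is the same regardless of \emph{when} Player 1 escapes, so that conditioning on escape leaves a position whose distribution is independent of the escape time.

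Running the same tracking with the roles reversed, from $\left\langle 2^{\ell},2^{\ell}\be,P_{2}\right\rangle$ the symmetric form of Lemma~\ref{lem:EverExit} gives that Player 2 escapes with probability $\frac{2}{2\be}=\frac{1}{\be}$, and upon escaping the position funnels to $\left\langle 2^{m}\cdot 2,2^{m},P_{1}\right\rangle$, i.e.\ Player 1 in the weeds with $\al=4$, $\be=2$. This is exactly the special position of Lemma~\ref{lem:SpecialPos}, and notably it does not depend on the original $\be$, so from there Player 1 wins with probability $\frac13$ and draws with probability $0$. Assembling the two escape stages then yields
\[
p_{\infty}^{\star}(\al,\be)=\frac{2}{\al}\left[\left(1-\frac{1}{\be}\right)+\frac{1}{\be}\cdot\frac13\right]=\frac{2}{\al}\left(1-\frac{2}{3\be}\right)=\frac{2}{\al}-\frac{4}{3\al\be},
\]
where the bracket splits the post-escape outcome into ``Player 2 is trapped forever'' (probability $1-\frac1\be$, an outright win for Player 1) and ``Player 2 escapes and then Player 1 wins from the special position'' (probability $\frac1\be\cdot\frac13$). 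Finally, every continuation that is not already decided a win or a loss must pass through both escapes into the special position, whose draw probability is $0$ by Lemma~\ref{lem:SpecialPos}; hence the total draw probability is at most $\frac{2}{\al}\cdot\frac{1}{\be}\cdot 0=0$.
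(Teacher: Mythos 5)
Your proposal is correct and follows essentially the same route as the paper: both use Lemma \ref{lem:EverExit} for the two escape probabilities ($\frac{2}{\al}$ and then $\frac{2}{2\be}$), observe that the successive escapes funnel deterministically (up to scale, via Lemma \ref{lem:2scale}) into the special position $\al=4,\be=2$ of Lemma \ref{lem:SpecialPos}, and kill the draw probability by noting any undecided continuation must pass through that position. The only cosmetic difference is that you compute the winning probability directly while the paper computes the losing probability and takes the complement.
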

\begin{proof}
We first calculate the probability that Player 1 loses. As in Lemma \ref{lem:SpecialPos},
Player 1 can lose the game in one of two disjoint ways:

a) Player 1 never gets out of the weeds. The probability that Player
1 never gets out of the weeds is $1-\frac{2}{\al}$ by Lemma \ref{lem:EverExit}. 

b) Player 1 successfully gets out of the weeds at some point and puts
Player 2 in the weeds; then Player 2 himself gets out of the weeds;
and finally player Player 1 subsequently loses the game from that
position. If Player 1 does get out of the weeds, the position will
be $x=2^{\ell}\cdot2$ and $y=2^{\ell}\cdot\left(2\be\right)$ for
some $\ell$, (i.e. the position evolves to $\al^{\prime}=2,\be^{\prime}=2\be)$.
Thus the probability that Player 2 gets out of the weeds from this
point is $\frac{2}{2\be}$ by Lemma \ref{lem:EverExit}. If Player 2 does successfully get out
of the weeds, the position will be $x=2^{j}\cdot4$ and $y=2^{j}\cdot2$
for some $j$, (i.e. we will be at $\al^{\prime\prime}=4,\be^{\prime\prime}=2$).
We have already analyzed this position in Lemma \ref{lem:SpecialPos}.
Thus we have:
\begin{eqnarray*}
 &  & \p\left(\text{P1 loses from }\left\langle 2^{k}\al,2^{k}\be,P_{1}\right\rangle \right)\\
 & = & \p\left(\text{\small P1 never escapes}\right)+\p\left(\text{\small P1 escapes}\right)\p\left(\text{\small P2 escapes}\right)\p\left(\text{\small P1 loses from }{\scriptstyle \left\langle 2^{j}\cdot4,2^{j}\cdot2,P_{1}\right\rangle }\right)\\
 & = & \left(1-\frac{2}{\al}\right)+\frac{2}{\al}\left(\frac{2}{2\be}\right)\p\left(\text{P1 loses from }\left\langle 2^{j}\cdot4,2^{j}\cdot2,P_{1}\right\rangle \right)\\
 & = & \left(1-\frac{2}{\al}\right)+\frac{2}{\al}\left(\frac{2}{2\be}\right)\frac{2}{3}.
\end{eqnarray*}
The probability of a draw is $0$ since if more than two changes of the upper hand occur, the game passes through the position $\left\langle 2^{j}\cdot4,2^{j}\cdot2,P_{1}\right\rangle$, from which the probability of a draw is 0 by Lemma \ref{lem:SpecialPos}. Finally then, taking the complement and plugging in $x = 2^k\al, y=2^k\be$ gives the result. \end{proof}
\begin{prop}
\label{prop:UpperHand} Assume both players play ``Continuous Guess Who?'' according to the optimal
bidding ansatz. Suppose we are at the position $\left\langle x,y,P_{1}\right\rangle =\left\langle 2^{k}\al,2^{k}\be,P_{1}\right\rangle $
with $\al\in(1,2]$ and $\be>2$, so that Player 1 has the upper hand at level $k\in\bZ$.
Then, the probability of a draw is 0 and the probability that Player 1 wins the game is:
\begin{equation}
p_{\infty}^{\star}(x,y) = 1-\frac{2^{k}}{y}+\frac{2}{3}\frac{2^{2k}}{xy}.
\end{equation}
\end{prop}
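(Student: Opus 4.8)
The plan is to exploit the fact that Player 1's upper-hand move is \emph{deterministic}: bidding $\half x$ sends his pool to exactly $\half x$ with probability $1$, no matter how the coin lands. Hence the first move carries the game with certainty from $\left\langle x,y,P_{1}\right\rangle $ to the new position $\left\langle \half x, y, P_{2}\right\rangle = \left\langle 2^{k-1}\al, 2^{k}\be, P_{2}\right\rangle$, and so $p_{\infty}^{\star}(x,y)$ equals the probability that Player 1 wins from this $P_2$-state. The idea is then to recognize this state as one already understood — a state in which the \emph{opponent} is in the weeds — and to transfer the conclusion of Proposition \ref{prop:InTheWeeds} to it using the player-swap symmetry of the game (the rules, the optimal bidding ansatz, and the abstract definitions of winning, losing, and drawing are all invariant under interchanging the two coordinates together with the two player labels).

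First I would verify that Player 2 is genuinely in the weeds after the halving. Writing $\left\langle 2^{k-1}\al, 2^{k}\be, P_{2}\right\rangle = \left\langle 2^{k-1}\al, 2^{k-1}(2\be), P_{2}\right\rangle$ and using $\al\in(1,2]$ together with $2\be>4>2$, this is exactly a state of $W^{\infty}_{k-1,P_{2}}$: Player 2 is in the weeds at level $k-1$ with parameters $\al'=\al\in(1,2]$ and $\be'=2\be>2$. Next I would apply the symmetry: the event that Player 1 wins from $\left\langle a,b,P_{2}\right\rangle$ corresponds to the event that Player 2 wins from $\left\langle b,a,P_{1}\right\rangle$, which by definition is the event that Player 1 \emph{loses} from $\left\langle b,a,P_{1}\right\rangle$. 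Taking $\langle a,b\rangle=\langle 2^{k-1}\al, 2^{k}\be\rangle$ sends us to $\left\langle 2^{k-1}(2\be), 2^{k-1}\al, P_{1}\right\rangle$, a bona fide Player-1-in-the-weeds position at level $k-1$ (its $\al$-parameter is $2\be>2$ and its $\be$-parameter is $\al\in(1,2]$), so Proposition \ref{prop:InTheWeeds} applies to it verbatim.

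It remains to assemble the pieces. Proposition \ref{prop:InTheWeeds} supplies two facts about $\left\langle 2^{k-1}(2\be), 2^{k-1}\al, P_{1}\right\rangle$: its draw probability is $0$, and, after simplifying the formula with level $k-1$, its value is $p_{\infty}^{\star}\!\left(2^{k-1}(2\be), 2^{k-1}\al\right)=\frac{1}{\be}-\frac{2}{3\al\be}$. Since the draw probability vanishes and the events that Player 1 loses and that Player 1 wins partition the probability space up to a null set, the probability Player 1 loses from this position is $1-\left(\frac{1}{\be}-\frac{2}{3\al\be}\right)$; transporting this back through the symmetry and the deterministic first move gives $p_{\infty}^{\star}(x,y)=1-\frac{1}{\be}+\frac{2}{3\al\be}$, and substituting $\be=y/2^{k}$ and $\al\be=xy/2^{2k}$ yields the claimed $1-\frac{2^{k}}{y}+\frac{2}{3}\frac{2^{2k}}{xy}$, with the draw probability from $\left\langle x,y,P_{1}\right\rangle $ equal to $0$ because the first move is deterministic and the resulting position has draw probability $0$. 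The step I expect to be the main obstacle is not the arithmetic but the symmetry bookkeeping around the draw event: one must confirm that the player-swap really preserves all three abstract events and that \{Player 1 loses\} and \{Player 2 loses\} are disjoint up to measure zero, so that ``Player 1 wins'' is genuinely complementary to ``the opponent wins.'' This disjointness is exactly what already underlies the complementation step in Proposition \ref{prop:InTheWeeds}, so once the draw probability is known to vanish there, it is inherited here and the remainder is routine.
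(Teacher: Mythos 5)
Your proposal is correct and takes essentially the same route as the paper's own proof: the deterministic bid of $\half x$ moves the game to $\left\langle 2^{k-1}\al,2^{k-1}(2\be),P_{2}\right\rangle$, the player-swap symmetry identifies Player 1's winning probability there with the losing probability of Player 1 from the weeds position $\left\langle 2^{k-1}(2\be),2^{k-1}\al,P_{1}\right\rangle$, and Proposition \ref{prop:InTheWeeds} plus complementation (legitimate because the draw probability is $0$) yields $1-\frac{1}{\be}+\frac{2}{3\al\be}$, which is the claimed formula after substituting $\al=x/2^{k}$, $\be=y/2^{k}$. Your additional bookkeeping about the near-disjointness of the two losing events is exactly the point the paper leaves implicit in its complementation step, so the two arguments coincide in substance.
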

\begin{proof}
When Player 1 has the upper hand, he bids exactly $\half 2^{k}\al$
and puts Player 2 in the weeds. By the symmetry between Player 1 and
Player 2, we use Proposition \ref{prop:InTheWeeds} to see the probability of a draw is 0 and to compute:
\begin{eqnarray*}
 &  & \p\left(\text{Player 1 wins from state}\left\langle 2^{k}\al,2^{k}\be,P_{1}\right\rangle \right)\\
 & = & \p\left(\text{Player 1 wins from state }\left\langle 2^{k-1}\al,2^{k-1}\left(2\be\right),P_{2}\right\rangle \right)\\
 & = & 1-\p\left(\text{Player 2 wins from state }\left\langle 2^{k-1}\al,2^{k-1}\left(2\be\right),P_{2}\right\rangle \right)\\
 & = & 1-\p\left(\text{Player 1 wins from state }\left\langle 2^{k-1}\left(2\be\right),2^{k-1}\al,P_{1}\right\rangle \right)\\ 
 & = & 1-\left(\frac{2}{2\be}-\frac{2}{2\be}\left(\frac{2}{2\al}\right)\frac{2}{3}\right).
\end{eqnarray*}
Plugging in $x=2^k\al$,$y=2^k\be$ gives the result.
\end{proof}
\begin{rem}
Despite the difference between ``Continuous Guess Who?'' and ordinary ``Guess Who?'', this analysis comes
remarkably close to the true value $p^{\star}(n,m)$ when evaluated at integer values. Indeed we have:
\begin{eqnarray}
p^{\star}(n,m)-p_{\infty}^{\star}(n,m) & = & \begin{cases}
-\frac{2}{3}\frac{1}{nm} & \text{ if }\left\langle n,m,P_{1}\right\rangle \in W_{k,P_{1}}\\
\frac{4}{3}\frac{1}{nm} & \text{ if }\left\langle n,m,P_{1}\right\rangle \in U_{k,P_{1}}
\end{cases}.
\end{eqnarray}
 The corrections $-\frac{2}{3}\frac{1}{nm}$ and $\frac{4}{3}\frac{1}{nm}$
were found serendipitously by comparing numerical
solutions of $p^{\star}$ to the exact formula $p_{\infty}^{\star}$. The exact formula for $p^{\star}$ was first hypothesized by adding these simple errors onto $p^{\star}_\infty$. A plot of $p_{\infty}^{\star}$ is provided in Figure \ref{fig:AsymptoticProb}.

\end{rem}

\section{Proof of Theorem \ref{thm:Main} \label{sec:Proof}}

Having guessed at the formula using the above analysis, we now
prove Theorem \ref{thm:Main} rigorously. Define functions:
\begin{eqnarray}
q_{W_{k}}(n,m) & := & \frac{2^{k+1}}{n}-\frac{2}{3}\frac{2^{2k+1}+1}{mn}\\
q_{U_{k}}(n,m) & := & 1-\frac{2^{k}}{m}+\frac{2}{3}\frac{2^{2k}+2}{mn}.
\end{eqnarray}
By stitching these functions together, these define the total function
$q(n,m)$:
\begin{equation}
q(n,m)=\begin{cases}
q_{W_{k}}\left(n,m\right)\ \text{if} & \left\langle n,m,P_{1}\right\rangle \in W_{k,P_{1}}\\
q_{U_{k}}\left(n,m\right)\ \text{if} & \left\langle n,m,P_{1}\right\rangle \in U_{k,P_{1}}\\
1 & \text{if }n=1\\
0 & \text{if }m=1
\end{cases}.
\end{equation}
\begin{lem}
\label{lem:qWeeds}Suppose that $\left\langle n,m,P_{1}\right\rangle \in W_{k,P_{1}}$
for some $k$. Then: 
\begin{equation}
q_{W_{k}}(n,m)=\max_{b\in[1,n-1]}\left\{1-\frac{b}{n}q(m,b)-\frac{n-b}{n}q(m,n-b)\right\},
\end{equation}
and the maximum is achieved at $b=2^{k}$.\end{lem}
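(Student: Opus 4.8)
The plan is to recast the maximization as a one-dimensional piecewise-linear optimization. Writing $\psi(c) := c\,q(m,c)$, the bracketed quantity to be maximized is exactly $f(b) := 1 - \tfrac1n\big(\psi(b)+\psi(n-b)\big)$, so maximizing $f$ over $b\in[1,n-1]$ is the same as minimizing $H(b):=\psi(b)+\psi(n-b)$. Note immediately that $H$, and hence $f$, is symmetric under $b\mapsto n-b$, so it suffices to locate the minimizer among $b\le n/2$ and then invoke this symmetry.

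The key structural input is to compute $\psi$ explicitly. Since $\left\langle n,m,P_{1}\right\rangle \in W_{k,P_{1}}$ we have $2^{k}<m\le 2^{k+1}$, and this pins down the cell of $\left\langle m,c,P_{1}\right\rangle$ as a function of the second coordinate $c$ alone: it lies in $U_{k,P_{1}}$ precisely when $c>2^{k}$, in $W_{j,P_{1}}$ precisely when $2^{j}<c\le 2^{j+1}$ with $0\le j\le k-1$, and has $q(m,1)=0$. Substituting the formulas for $q_{U_{k}}$ and $q_{W_{j}}$ and multiplying through by $c$ turns every $1/c$ into a constant, giving
\[
\psi(c)=\begin{cases} c-2^{k}+\tfrac{2}{3}\tfrac{2^{2k}+2}{m}, & c>2^{k},\\ \tfrac{2^{j+1}}{m}\,c-\tfrac{2}{3}\tfrac{2^{2j+1}+1}{m}, & 2^{j}<c\le 2^{j+1},\ 0\le j\le k-1.\end{cases}
\]
Thus $\psi$ is piecewise linear: its slope is exactly $1$ on the upper-hand branch $c>2^{k}$, and equals $\tfrac{2^{j+1}}{m}\le \tfrac{2^{k}}{m}<1$ on each weeds piece (the inequality using $m>2^{k}$). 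One checks continuity at the interior dyadic points and at $c=1$, while at $c=2^{k}$ there is an upward jump (the $U_{k}$ value exceeds the $W_{k-1}$ value by $\tfrac{2}{m}$); the point $c=2^{k}$ itself belongs to $W_{k-1}$, so $\psi(2^{k})=\tfrac{2^{2k+1}-2}{3m}$, which also reads correctly as $0$ in the degenerate case $k=0$, where $2^{k}=1$.

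The optimization now falls out of two facts. First, because $n>2^{k+1}$ the two arguments $b$ and $n-b$ cannot both be $\le 2^{k}$, so at least one lies on the slope-$1$ branch. If both exceed $2^{k}$ then the two slope-$1$ terms cancel the $b$-dependence, $H$ is constant, and a short computation gives $f=q_{W_{k}}(n,m)-\tfrac{2}{mn}$ there, strictly below the target. If instead $b\le 2^{k}$, then $n-b>2^{k}$ contributes $\psi(n-b)=(n-b)-2^{k}+C$ with $C:=\tfrac{2}{3}\tfrac{2^{2k}+2}{m}$, and maximizing $f$ reduces to maximizing $b-\psi(b)$ over the weeds $b\in[1,2^{k}]$. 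On each weeds piece this has slope $1-\tfrac{2^{j+1}}{m}>0$, and $\psi$ is continuous there, so $b-\psi(b)$ is increasing and attains its maximum at the right endpoint $b=2^{k}$. The symmetric case $n-b\le 2^{k}$ is handled by $f(b)=f(n-b)$. Evaluating at $b=2^{k}$ and substituting $\psi(2^{k})$ and $C$ yields exactly $f(2^{k})=\tfrac{2^{k+1}}{n}-\tfrac{2}{3}\tfrac{2^{2k+1}+1}{mn}=q_{W_{k}}(n,m)$, which identifies both the maximum value and the maximizer.

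The main obstacle is the region bookkeeping in the second paragraph: everything hinges on correctly identifying, uniformly in $c$, which weeds/upper-hand cell $\left\langle m,c,P_{1}\right\rangle$ occupies (this is where the hypothesis $2^{k}<m\le 2^{k+1}$ does all the work) and on verifying the resulting profile of $\psi$, in particular the slope dichotomy (slope $<1$ in the weeds versus slope $=1$ on the upper hand) together with the upward jump at $2^{k}$. Once that profile is in hand the case analysis is routine; the only arithmetic that must be checked carefully is the strict gap $\tfrac{2}{mn}$ in the both-large case and the exact match with $q_{W_{k}}$ at $b=2^{k}$, and one should confirm that the argument degenerates correctly when $k=0$ (no weeds pieces, $b=2^{k}=1$, $q(m,1)=0$).
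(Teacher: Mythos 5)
Your proposal is correct and takes essentially the same route as the paper's proof: the identical three-way case split (bid lands one pool in the weeds and one in the upper hand; both in the upper hand, where the expression is constant and equals $q_{W_k}(n,m)-\frac{2}{mn}$; both in the weeds, impossible since $n>2^{k+1}$), with all of your cell bookkeeping, the jump of $\frac{2}{m}$ at $c=2^k$, and the evaluation $f(2^k)=q_{W_k}(n,m)$ checking out. The one difference is packaging rather than substance: by passing to $\psi(c)=c\,q(m,c)$ you merge the paper's two monotonicity steps in its Case I (increasing in $b$ for fixed weeds level $\ell$, then increasing in $\ell$) into the single observation that $b-\psi(b)$ is continuous and piecewise increasing on $[1,2^k]$ — the same piecewise-linear device the paper itself uses in Case II of Lemma \ref{lem:qUpperHand}.
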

\begin{proof}
By symmetry between \textbf{$b$ }and\textbf{ $n-b$}, we assume without loss
that $1\leq b\leq\floor{\half n}$. We will show that $1-\frac{b}{n}q(m,b)-\frac{n-b}{n}q(m,n-b)\leq q_{W_{k}}(n,m)$
for all possible bids $b$ with equality achieved at $b=2^{k}$. Consider
the following cases:

\uline{Case I:} $\left\langle b,m,P_{2}\right\rangle \in W_{\ell,P_{2}}$
with $2^{\ell}< b\leq2^{\ell+1}$ and $\left\langle n-b,m,P_{2}\right\rangle \in U_{k,P_{2}}$.

We begin by simplifying:
\begin{eqnarray}
 &  & 1-\frac{b}{n}q_{W_{\ell}}(m,b)-\frac{n-b}{n}q_{U_{k}}(m,n-b) \label{eqn:b_eqn} \\
 & = & 1-\frac{b}{n}\left(\frac{2^{\ell+1}}{m}-\frac{2}{3}\frac{2^{2\ell+1}+1}{mb}\right)-\frac{n-b}{n}\left(1-\frac{2^{k}}{n-b}+\frac{2}{3}\frac{2^{2k}+2}{m(n-b)}\right) \nonumber \\
 & = & \frac{b}{n}\left(1-\frac{2^{\ell+1}}{m}\right)+\frac{2^{k}}{n}+\frac{2}{3}\frac{2^{2\ell+1}-2^{2k}-1}{mn}. \nonumber 
\end{eqnarray}
Since $\left\langle b,m,P_{2}\right\rangle \in W_{\ell,P_{2}}$, we have $2^{\ell+1} < m$ here. It follows that for a fixed $\ell$,
the above is a monotone increasing function of $b$. We thus obtain
an inequality by replacing $b$ by its maximum value $b=2^{\ell+1}$,
with equality if and only if $b=2^{\ell+1}$:
\begin{eqnarray*}
 & & 1-\frac{b}{n}q_{W_{\ell}}(m,b)-\frac{n-b}{n}q_{U_{k}}(m,n-b)\\
 & \leq & \frac{2^{\ell+1}}{n}\left(1-\frac{2^{\ell+1}}{m}\right)+\frac{2^{k}}{n}+\frac{2}{3}\frac{2^{2\ell+1}-2^{2k}-1}{mn}\\
 & = & \frac{2^{\ell+1}}{n}\left(1 -\frac{2}{3}\frac{2^{\ell+1}}{m}\right)+\frac{2^{k}}{n}-\frac{2}{3}\frac{2^{2k}+1}{mn}.
\end{eqnarray*}

This is monotone increasing as a function of $2^{\ell+1}$ since $2^{\ell+1} < m$. To see this, notice that if we increment $\ell$ by one, the difference is:
\begin{equation*}
2^{\ell+1}\left(1-\frac{2}{3m}2^{\ell+1}\right) - 2^{\ell}\left(1-\frac{2}{3m}2^{\ell}\right)=2^{\ell} \left(1 - \frac{2^{\ell+1}}{m} \right) > 0.
\end{equation*}
Thus this function is maximized when $\ell$ takes its maximum value. We notice that we must have $\ell\leq k-1$, since $\left\langle b,m,P_2 \right\rangle \in W_{\ell,P_2}$ gives $2^{\ell+1} < m$ and $\left\langle n,m,P_1 \right\rangle \in W_{k,P_1}$ gives $m\leq 2^{k+1}$.  Thus this function is maximized when $\ell=k-1$, with equality if and only if $\ell=k-1$. By inspection, plugging in $\ell = k-1, b=2^{k}$ gives the maximum value is exactly $q_{W_{k}}(n,m)$ as desired.

\uline{Case II:} $\left\langle b,m,P_{2}\right\rangle \in U_{k,P_{2}}$
and $\left\langle n-b,m,P_{2}\right\rangle \in U_{k,P_{2}}$.

Consider here:
\begin{eqnarray*}
 &  & 1-\frac{b}{n}q_{U_{k}}(m,b)-\frac{n-b}{n}q_{U_{k}}(m,n-b)\\
 & = & 1-\frac{b}{n}\left(1-\frac{2^{k}}{b}+\frac{2}{3}\frac{2^{2k}+2}{mb}\right)-\frac{n-b}{n}\left(1-\frac{2^{k}}{n-b}+\frac{2}{3}\frac{2^{2k}+2}{m(n-b)}\right)\\
 & = & 0+\frac{2^{k}}{n}-\frac{2}{3}\frac{2^{2k}+2}{mn}+\frac{2^{k}}{n}-\frac{2}{3}\frac{2^{2k}+2}{mn}\\
 & = & \frac{2^{k+1}}{n}-\frac{2}{3}\frac{2^{2k+1}+4}{mn}\\
 & < & \frac{2^{k+1}}{n}-\frac{2}{3}\frac{2^{2k+1}+1}{mn}=q_{W_{k}}(n,m).
\end{eqnarray*}
Interestingly, this does not depend on $b$ at all, and is always
worse than $q_{W_k}(n,m)$ by $\frac{2}{mn}$. 

\uline{Case III:} $\left\langle b,m,P_{2} \right\rangle \in W_{\ell_{b},P_{2}}$ and $\left\langle n-b,m,P_{2}\right\rangle\in W_{\ell_{n-b},P_{2}}$
, where $\ell_{b}=\floor{\log_{2}(b-1)}$ $\ell_{n-b}=\floor{\log_{2}(n-b-1)}$.

Assume without loss that $\ell_{n-b}\geq\ell_{b}$. As in Case I, because $\left\langle n-b,m,P_{2} \right\rangle \in W_{\ell_{n-b},P_{2}}$ and $\left\langle n,m,P_{1} \right\rangle \in W_{k,P_{1}}$, we must have $\ell_{b}\leq\ell_{n-b}\leq k-1$.
But then we see that there are no values of $b$ that fall into this
case at all, since we would have $b\leq2^{\ell_{b}+1}\leq2^{k}$ and
$n-b\leq2^{\ell_{n-b}}\leq2^{k}$ which, by summing, is seen to be
in contradiction to the fact that $n\geq2^{k+1}+1$ from $\left\langle n,m,P_{1} \right\rangle \in W_{k,P_{1}}$.\end{proof}
\begin{lem}
\label{lem:qUpperHand}Suppose that $\left\langle n,m,P_{1}\right\rangle \in U_{k,P_{1}}$
for some $k$. Then: 
\begin{equation}
q_{U_{k}}(n,m)=\max_{b\in[1,n-1]}\left\{1-\frac{b}{n}q(m,b)-\frac{n-b}{n}q(m,n-b)\right\},
\end{equation}
and the maximum is achieved at $b=\floor{\frac{1}{2}n}$.\end{lem}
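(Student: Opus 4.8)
The plan is to follow the same scheme as Lemma \ref{lem:qWeeds}, but it pays to recast the optimization first. Since $1-\frac{b}{n}q(m,b)-\frac{n-b}{n}q(m,n-b)=1-\oo n\Phi(b)$ where $\Phi(b):=b\,q(m,b)+(n-b)\,q(m,n-b)$, maximizing the bracket is the same as minimizing $\Phi$, and because $\Phi(b)=\Phi(n-b)$ I may assume $1\le b\le\floor{\half n}$; note $\floor{\half n}\le2^{k}$ since $n\le2^{k+1}$. Writing $\phi(a):=a\,q(m,a)$, the target is the symmetric sum $\Phi(b)=\phi(b)+\phi(n-b)$, so I first compute $\phi$ explicitly. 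Fixing $K$ with $2^{K}<m\le2^{K+1}$ (so $K\ge k$ because $\langle n,m,P_{1}\rangle\in U_{k,P_{1}}$ forces $m>2^{k}$), the symmetric state $\langle m,a,P_{1}\rangle$ lies in $W_{\ell_{a},P_{1}}$ when $a\le2^{K}$, where $\ell_{a}$ is the band of $a$, and in $U_{K,P_{1}}$ when $a>2^{K}$. Substituting the matching formula for $q$ into $\phi(a)=a\,q(m,a)$ gives
\begin{equation*}
\phi(a)=\begin{cases}
\oo m\left(h(a)-\tfrac23\right) & 1\le a\le2^{K},\\
a-2^{K}+\tfrac23\dfrac{2^{2K}+2}{m} & a>2^{K},
\end{cases}
\qquad
h(a):=a\,2^{\ell_{a}+1}-\tfrac23\,2^{2\ell_{a}+1},
\end{equation*}
where $\ell_{a}$ is defined by $2^{\ell_{a}}<a\le2^{\ell_{a}+1}$; setting $\ell_{1}=-1$ makes $\phi(1)=0$, consistent with the terminal value $q(m,1)=0$.

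The engine of the proof is the function $h$. Computing its forward differences shows $h(a+1)-h(a)=2^{\ell_{a}+1}$ within each band and that these differences jump upward at every power of two, so $h$ is strictly increasing and discretely convex. Because $b\le\floor{\half n}\le2^{k}\le2^{K}$, the bid $b$ always lands in the top (weeds) branch of $\phi$. Two situations then arise. If $n-b\le2^{K}$ as well — automatic when $K>k$, and the sub-range $n-2^{k}\le b\le\floor{\half n}$ when $K=k$ — then $\Phi(b)=\oo m\bigl(h(b)+h(n-b)\bigr)-\tfrac{4}{3m}$. By convexity of $h$, the symmetric sum $h(b)+h(n-b)$ with $b+(n-b)=n$ fixed is minimized at the most balanced split $b=\floor{\half n}$. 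Evaluating $h(\floor{\half n})+h(\ceil{\half n})$ (separating the two parities of $n$, and the extreme position $n=2^{k}+1$) yields $n\,2^{k}-\tfrac23 2^{2k}$, whence $1-\oo n\Phi(\floor{\half n})=q_{U_{k}}(n,m)$, as claimed.

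It remains to rule out the overshooting bids, i.e. the case $K=k$ with $n-b>2^{k}$, in which $\phi(n-b)$ uses the lower (upper-hand) branch. I would compare against the boundary bid $c:=n-2^{k}$, which lies in the balanced sub-range already handled, so $\Phi(c)\ge\Phi(\floor{\half n})$. A direct evaluation gives $\phi(n-b)-\phi(2^{k})=(n-b-2^{k})+\tfrac2m$, while every slope of $h$ on $[\,b,c\,]$ is at most $2^{k}$, so $\phi(c)-\phi(b)\le\tfrac{2^{k}}{m}(c-b)$. Using $n-b-2^{k}=c-b$ and adding,
\begin{equation*}
\Phi(b)-\Phi(c)\ \ge\ \left(n-2^{k}-b\right)\left(1-\frac{2^{k}}{m}\right)+\frac2m\ >\ 0,
\end{equation*}
since $m>2^{k}$; hence every overshoot bid is strictly worse than $\floor{\half n}$.

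The hard part is not any single estimate but the convexity machinery together with its boundary bookkeeping: I must verify that the forward differences of $h$ are genuinely nondecreasing across each power-of-two boundary, where $\ell_{a}$ jumps; confirm that the unconstrained convex minimizer $\floor{\half n}$ stays inside the feasible weeds-weeds range; and check that the balanced value collapses to $q_{U_{k}}(n,m)$ for both parities of $n$ and at $n=2^{k}+1$. The overshoot estimate is comparatively routine once the jump-and-slope bounds are in hand. This mirrors Lemma \ref{lem:qWeeds} with the roles reversed: there the optimal bid pushed the opponent into an upper-hand state, whereas here the low-risk balanced bid keeps both induced states in the weeds.
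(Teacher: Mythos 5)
Your proof is correct, and its skeleton coincides with the paper's: reduce by symmetry to $b\le\floor{\half n}$, split according to whether the state handed to Player 2 is weeds/weeds or weeds/upper-hand (both-upper-hand being vacuous), and settle the weeds/weeds case by convexity of a piecewise-linear function --- your $h$ is exactly twice the paper's $f(a)=\left(a-\tfrac{2}{3}2^{\floor{\log_{2}(a-1)}}\right)2^{\floor{\log_{2}(a-1)}}$, which the paper likewise writes as a sup of linear functions to conclude that the balanced bid minimizes $f(b)+f(n-b)$. Where you genuinely diverge is the mixed case ($m\le 2^{k+1}$ and $n-b>2^{k}$, the paper's Case I): the paper clears denominators and grinds through a chain of inequalities (using $n>2^{k}+b$, then $b\le 2^{\ell+1}$, then reducing to nonnegativity of $\tfrac{2}{3}2^{-2c}-2^{-c}+\tfrac{1}{3}$), whereas you compare the overshooting bid $b$ with the boundary bid $c=n-2^{k}$, already covered by the convexity case, via the jump identity $\phi(n-b)-\phi(2^{k})=(n-b-2^{k})+\tfrac{2}{m}$ and the slope bound $2^{k}<m$. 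Your route is cleaner and more explanatory: it isolates \emph{why} pushing the opponent into an upper-hand state is costly (above $2^{k}$ the function $\phi$ grows at unit rate plus a fixed penalty $\tfrac{2}{m}$, while below $2^{k}$ it grows at rate at most $\tfrac{2^{k}}{m}<1$), it never needs to track the band $\ell_{b}$ of the bid, and it makes explicit two things the paper leaves implicit --- that overshoot can only occur when $K=k$, and that the $b=1$ endpoint is consistent with the weeds formula under the convention $\ell_{1}=-1$. The price is the boundary bookkeeping you flagged (parities of $n$, the position $n=2^{k}+1$ where the two halves of the balanced split lie in different bands), all of which checks out: in every case $h(\floor{\half n})+h(\ceil{\half n})=n2^{k}-\tfrac{2}{3}2^{2k}$, which recovers $q_{U_{k}}(n,m)$ exactly.
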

\begin{proof}
By symmetry between $b$ and $n-b$, we assume without loss that $1\leq b\leq\floor{\half n}$.
We will show that $1-\frac{b}{n}q(m,b)-\frac{n-b}{n}q(m,n-b)\leq q_{U_{k}}(n,m)$
for all possible bids $b$ with equality achieved at $b=\floor{\half n}$.
Consider the following cases:

\uline{Case I}: $\left\langle b,m,P_{2}\right\rangle \in W_{\ell,P_{2}}$
with $2^{\ell}< b\leq2^{\ell+1}$ and $\left\langle n-b,m,P_{2}\right\rangle \in U_{k,P_{2}}$.

We start with Eq. \ref{eqn:b_eqn} for $1-\frac{b}{n}q_{W_{\ell}}(m,b)-\frac{n-b}{n}q_{U_{k}}(m,n-b)$ and show directly that the difference from $q_{U_k}(n,m)$ is always positive. It is convenient to begin by multiplying out the common denominator:
\begin{eqnarray*}
 & & nm \left( q_{U_k}(n,m) - \left( 1-\frac{b}{n}q_{W_{\ell}}(m,b)-\frac{n-b}{n}q_{U_{k}}(m,n-b)\right)\right)\\
 &=& nm \left( \left(1-\frac{2^{k}}{m}+\frac{2}{3}\frac{2^{2k}+2}{mn}\right) - \left( \frac{b}{n}\left(1-\frac{2^{\ell+1}}{m}\right)+\frac{2^{k}}{n}+\frac{2}{3}\frac{2^{2\ell+1}-2^{2k}-1}{mn} \right)\right)\\
 &=& nm - n2^k + \frac{2}{3}\left(2^{2k} + 2\right) - b\left(m - 2^{\ell+1}\right) - m 2^k -\frac{2}{3}\left(2^{2\ell+1} - 2^{2k} - 1\right)\\
 &=& n \left(m - 2^k\right) - b\left(m - 2^{\ell+1}\right) - m 2^k -\frac{2}{3}2^{2\ell+1} + \frac{4}{3} 2^{2k} + 2.
\end{eqnarray*}
Now notice that since $\left\langle n-b,m,P_{2}\right\rangle \in U_{k,P_{2}}$, it must be that $n-b > 2^k$ and $m > 2^k$. Thus the coefficient of $n$ is positive, so using $n > 2^{k} + b$ gives:
\begin{eqnarray*}
 &>& \left(2^{k} + b\right) \left(m - 2^k\right) - b\left(m - 2^{\ell+1}\right) - m 2^k -\frac{2}{3}2^{2\ell+1} + \frac{4}{3} 2^{2k} + 2\\
 & = & m 2^{k} + b m - 2^{2k} - b 2^{k} -bm + b2^{\ell+1} - m 2^k -\frac{2}{3}2^{2\ell+1} + \frac{4}{3} 2^{2k} + 2\\
 &=& -b\left(2^k - 2^{\ell+1}\right) - \frac{2}{3}2^{2\ell+1} + \frac{1}{3}2^{2k} + 2.
\end{eqnarray*}
We notice now that it must be the case that $\ell+1 \leq k$, because $\left\langle n-b,m,P_{2}\right\rangle \in U_{k,P_{2}}$ gives $2^{k+1} \geq m$ and $\left\langle b,m,P_{2}\right\rangle \in W_{\ell,P_{2}}$ gives $m > 2^{\ell+1}$. Hence the coefficient of $b$ is non-positive, and we can use $b \leq 2^{\ell+1}$ to get:
\begin{eqnarray*}
 &\geq& -2^{\ell+1}\left(2^k - 2^{\ell+1}\right) - \frac{2}{3}2^{2\ell+1} + \frac{1}{3}2^{2k} + 2\\
 &=& \frac{2}{3}2^{2(\ell+1)} - 2^{\ell+1}2^{k} + \frac{1}{3} 2^{2k} + 2.
\end{eqnarray*}
Finally, since $\ell+1 \leq k$, define $\ell+1 = k - c$, where $c\in \bN \cup\{0\}$ and then write the above expression as:
\begin{equation*}
= 2^{2k}\left( \frac{2}{3}2^{-2c} - 2^{-c} + \frac{1}{3} \right) + 2.
\end{equation*}  
By inspection, $\frac{2}{3}2^{-2c} - 2^{-c} + \frac{1}{3} = 0$ for $c=0$ and $c=1$ and is strictly positive for $c\geq 2$. Thus this expression is always $\geq 0$ and we have the desired inequality.

\uline{Case II: }$\left\langle b,m,P_{2}\right\rangle \in W_{\ell_{b},P_{2}}$
and $\left\langle n-b,m,P_{2}\right\rangle \in W_{\ell_{n-b},P_{2}}$
with $\ell_{b}=\floor{\log_{2}(b-1)}$ and $\ell_{n-b}=\floor{\log_{2}(n-b-1)}$.

We have in this case:
\begin{eqnarray*}
 &  & 1-\frac{b}{n}q_{W_{k}}(m,b)-\frac{n-b}{n}q_{W_{k}}(m,n-b)\\
 & = & 1-\frac{b}{n}\left(\frac{2^{\ell_{b}+1}}{m}-\frac{2}{3}\frac{2^{\ell_{b}+1}-1}{mb}\right)-\frac{n-b}{n}\left(\frac{2^{\ell_{n-b}+1}}{m}-\frac{2}{3}\frac{2^{\ell_{n-b}+1}-1}{m(n-b)}\right)\\
 & = & 1+\frac{4}{3nm}-\frac{2}{nm}\left(\left(b-\frac{2}{3}2^{\ell_b}\right)2^{\ell_b}+\left((n-b)-\frac{2}{3}2^{\ell_{n-b}}\right)2^{\ell_{n-b}}\right).
\end{eqnarray*}
So it satisfies us to \uline{minimize} the function that appears:
\begin{equation*}
\left(b-\frac{2}{3}2^{\ell_{b}}\right)2^{\ell_{b}}+\left((n-b)-\frac{2}{3}2^{\ell_{n-b}}\right)2^{\ell_{n-b}}.
\end{equation*}
Since $\ell_{b}=\floor{\log_{2}(b-1)}$, $\ell_{n-b}=\floor{\log_{2}(n-b-1)}$
we recognize this as $f(b)+f(n-b)$ where $f:\bN\to\bR$ is the function
$f(a)=\left(a-\frac{2}{3}2^{\floor{\log_{2}(a-1)}}\right)2^{\floor{\log_{2}(a-1)}}$.
But now we observe that this function is convex. Moreover this is
a piecewise linear function and can be written as a maximum over a
number of linear functions $f(a)=\sup_{i\in\bN}\left(a-\frac{2}{3}2^{i}\right)2^{i}$.
Observe then that $f(b)+f(n-b)\geq f(\floor{\frac{1}{2}n})+f\left(\ceil{\frac{1}{2}n}\right)$.
In other words the bid $b=\floor{\frac{1}{2}n}$ cannot be beaten. When
we plug in $b=\floor{\frac{1}{2}n}$ we get exactly the desired bound
of $q_{U_{k}}(n,m)$. 

\uline{Case III:} $\left\langle b,m,P_{2}\right\rangle \in U_{k,P_{2}}$
and $\left\langle n-b,m,P_{2}\right\rangle \in U_{k,P_{2}}$ 

There are no such values of $b$ since we would have $b\geq2^{k}+1$ and
$n-b\geq2^{k}+1$ from the definition of $U_{k,P_2}$, which by summing is seen to contradict $n\leq2^{k+1}$ from the fact that $\left\langle n,m,P_{1} \right\rangle \in U_{k,P_{1}}$.\end{proof}
\begin{rem}
A careful reading of Case II of Lemma \ref{lem:qUpperHand} reveals
that for some values of $n$, there are other bids $b$ that match
the performance of $b=\floor{\half n}$. Indeed, the function $f$
is piecewise linear so when $\floor{\half n}$ is on the interior
of a linear segment, other bids that have both $b$ and $n-b$ on
the same segment will do just as well. \end{rem}
\begin{thm}
(Restatement of Theorem \ref{thm:Main}) We have that $p^{\star}(n,m)=q(n,m)$
for all $n,m$ and an optimal bidding is $b^{\star}(n,m)=2^{k}$ if
$\left\langle n,m,P_{1}\right\rangle \in W_{k,P_{1}}$ and $b^{\star}(n,m)=\floor{\half n}$
when $\left\langle n,m,P_{1}\right\rangle \in U_{k,P_{1}}$.\end{thm}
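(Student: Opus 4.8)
The plan is to prove the identity $p^{\star}=q$ by strong induction on the sum $s=n+m$, exploiting the observation recorded in the remark after Proposition \ref{prop:Recursion} that $s$ strictly decreases as the game proceeds, so that the recurrence for $p^{\star}(n,m)$ only refers to values of $p^{\star}$ at strictly smaller sums. For the base of the induction I would take the boundary states $n=1$ and $m=1$: here the definition of $q$ gives $q(1,m)=1=p^{\star}(1,m)$ and $q(n,1)=0=p^{\star}(n,1)$, matching the initial data of the game.

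For the inductive step I would fix $(n,m)$ with $n,m\geq 2$, assume $p^{\star}(n',m')=q(n',m')$ whenever $n'+m'<n+m$, and begin from the recurrence of Proposition \ref{prop:Recursion}. Since the two pairs appearing inside the maximum, $(m,b)$ and $(m,n-b)$, have argument-sums at most $m+(n-1)<n+m$ for every $b\in[1,n-1]$ (using $1\leq b\leq n-1$), the induction hypothesis lets me substitute $q$ for $p^{\star}$ throughout, yielding
\begin{equation*}
p^{\star}(n,m) = \max_{b\in[1,n-1]}\left\{1-\frac{b}{n}q(m,b)-\frac{n-b}{n}q(m,n-b)\right\}.
\end{equation*}

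At this point I would invoke the fact that every state $\langle n,m,P_{1}\rangle$ with $n,m\geq 2$ lies in exactly one of the sets $W_{k,P_{1}}$ or $U_{k,P_{1}}$. This can be verified by writing $2^{k}<n\leq 2^{k+1}$ and $2^{\ell}<m\leq 2^{\ell+1}$ and checking the cases $k>\ell$, $k<\ell$, $k=\ell$ directly: one finds the state is in the weeds precisely when $k>\ell$ and has the upper hand precisely when $k\leq\ell$, so the dichotomy is exhaustive and mutually exclusive. If the state is in the weeds, Lemma \ref{lem:qWeeds} identifies the right-hand side above as $q_{W_{k}}(n,m)=q(n,m)$ with maximizer $b=2^{k}$; if it has the upper hand, Lemma \ref{lem:qUpperHand} identifies it as $q_{U_{k}}(n,m)=q(n,m)$ with maximizer $b=\floor{\half n}$. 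Either way $p^{\star}(n,m)=q(n,m)$ and the claimed bid is optimal, which closes the induction.

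The substantive work — the delicate case analyses that bound the maximized expression and pin down the optimal bid — has already been carried out inside Lemmas \ref{lem:qWeeds} and \ref{lem:qUpperHand}, so the theorem itself is essentially bookkeeping. The two points I would be most careful about are confirming that the arguments inside the recurrence always have strictly smaller sum, so that the induction is non-circular, and confirming that the weeds/upper-hand dichotomy is simultaneously exhaustive and disjoint on $\{n,m\geq 2\}$, so that exactly one of the two lemmas applies at each step. I do not anticipate any genuine obstacle beyond these two verifications.
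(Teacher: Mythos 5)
Your proposal is correct and follows essentially the same route as the paper: strong induction on the sum $n+m$, substituting $q$ for $p^{\star}$ inside the recurrence of Proposition \ref{prop:Recursion} via the induction hypothesis, and then invoking Lemmas \ref{lem:qWeeds} and \ref{lem:qUpperHand} to identify the maximum and the optimal bid. Your two refinements --- taking the boundary states $n=1$, $m=1$ as the base case and explicitly checking that the weeds/upper-hand dichotomy is exhaustive and disjoint for $n,m\geq 2$ --- are points the paper handles implicitly (the latter is asserted in the text preceding the definitions), so they strengthen the exposition without changing the argument.
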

\begin{proof}
From Lemma \ref{lem:qWeeds} and Lemma \ref{lem:qUpperHand}, we know
that $q$ and $p^{\star}$ both satisfy the same recursion relation
from Proposition \ref{prop:Recursion}. We now prove that $p^{\star}(n,m)=q(n,m)$
for all $(n,m)$ by induction on the sum $n+m$. Let $A_{s}=\left\{ (n,m):n+m=s\right\} $
and consider the statement that $p^{\star}(n,m)=q(n,m)$ for every for all
$(n,m)\in\cup_{3\leq s\leq r}A_{s}$. The base case $r=3$ is clear
since $p^{\star}(2,1)=q(2,1)=0$ and $p^{\star}(1,2)=q(1,2)=1$. Now suppose $p^{\star}(n,m)=q(n,m)$
for every for all $(n,m)\in A_{r}$ $\forall3\leq s\leq r$. We have
then for any $(n,m)\in A_{s}$ that:
\begin{eqnarray*}
p^{\star}(n,m) & = & \max_{b\in[1,n-1]}\left\{1-\frac{b}{n}p^{\star}(m,b)-\frac{n-b}{n}p^{\star}(m,n-b)\right\}\text{ (recurrence for }p^{\star})\\
 & = & \max_{b\in[1,n-1]}\left\{1-\frac{b}{n}q(m,b)-\frac{n-b}{n}q(m,n-b)\right\}\text{ (by induction hypothesis)}\\
 & = & q(n,m)\text{ (recurrence relation for }q).
\end{eqnarray*}
The induction hypothesis can be applied since every pair $\left(m,b\right)$,
$(m,n-b)$ has $m+b\leq s-1$ and $m+n-b\leq s-1$ since $1\leq b\leq n-1$.
By the principle of induction, this shows that $p^{\star}(n,m)=q(n,m)$ for
all $(n,m)\in\cup_{3\leq s\leq r}A_{s}$ and for all $r\geq3$. But
$\cup_{r\geq3}A_{r}$ covers the entire quadrant. Thus $p^{\star}(n,m)=q(n,m)$
for all $n,m$ as desired. In light of $p^{\star}\equiv q$, Lemma
\ref{lem:qWeeds} and Lemma \ref{lem:qUpperHand} also show that $b^{\star}(n,m)=2^{k}$
for $\left\langle n,m,P_{1}\right\rangle \in W_{k,P_{1}}$ and $b^{\star}(n,m)=\floor{\half n}$
when $\left\langle n,m,P_{2}\right\rangle \in U_{k,P_{1}}$ is an
optimal bidding strategy.
\end{proof}

\section*{Acknowledgments}
The author was supported by a MacCracken fellowship from New York University and National Science Foundation grant DMS-1209165. The author is also very grateful to the anonymous referee for their careful reading of the article and in particular for suggestions that greatly improved the presentation of ideas in Section 3.

\bibliographystyle{apt}
\bibliography{GuessWhoBib}

\inputencoding{latin1}Email:\inputencoding{latin9}\foreignlanguage{english}{\href{mailto:nica@cims.nyu.edu}{nica@cims.nyu.edu}}\selectlanguage{english}%

\end{document}